\newtheorem{theo}{Th\'eor\`eme}[section]
\newtheorem{lemm}[theo]{Lemme}
\newtheorem{prop}[theo]{Proposition}
\newtheorem{fait}[theo]{Fait}
\newtheorem{rema}[theo]{Remarque}
\newtheorem{defi}[theo]{D\'efinition}
\numberwithin{equation}{section}
\newenvironment{proof}{{\flushleft \em D\'emonstration\,: }}{\hfill $\square$ \vspace{5mm}}
\def \R{\mathbb R}
\def \N{{\mathbb N}}
\def \Z{\mathbb Z}
\def \H{\mathbb H}
\def\D{\mathbb D}
\begin{document}

\title{Densit\'e de demi-horocycles sur une surface hyperbolique g\'eom\'etriquement infinie}

\author{Barbara Schapira}

\maketitle

\emph{LAMFA, Universit\'e Picardie Jules Verne, 33 rue St Leu, 
80000 Amiens, France}

\footnote{37A40,37A17, 37B99, 37D40}

\begin{abstract} On the unit tangent bundle of a hyperbolic surface, 
we study the density of positive orbits $(h^s v)_{s\ge 0}$ under the horocyclic flow. 
More precisely, given a full  orbit $(h^sv)_{s\in \R}$, 
we prove that under a weak assumption on the vector $v$, 
both half-orbits $(h^sv)_{s\ge 0}$ and $(h^s v)_{s\le 0}$ 
are simultaneously dense or not in the nonwandering set 
$\mathcal{E}$ of the horocyclic flow. 
We give also a counter-example to this result when this assumption is not satisfied. 
\end{abstract}



\section{Introduction}

Hedlund \cite{Hedlund} a d\'emontr\'e que sur le fibr\'e unitaire tangent d'une 
surface hyperbolique de volume fini, toutes les orbites positives non p\'eriodiques 
 $(h^sv)_{s\ge 0}$ du flot horocyclique  sont denses. 

Par ailleurs, on a maintenant un crit\`ere g\'eom\'etrique complet 
(voir \cite{Dalbo} pour un r\'esultat g\'en\'eral et des r\'ef\'erences) pour savoir si, 
sur une surface hyperbolique quelconque, 
un horocycle $(h^sv)_{s\in \R}$ est dense dans l'ensemble non 
errant $\mathcal{E}$ du flot horocyclique ou non. Il l'est 
si et seulement si $v$ est {\em horosph\'erique} (voir paragraphe 3). 

Une question naturelle du point de vue dynamique est de savoir si 
lorsqu'un tel horocycle complet est dense, 
les demi-horocycles $(h^sv)_{s\ge 0}$ et $(h^sv)_{s\le 0}$ le sont tous les deux. 

Hedlund avait un r\'esultat positif partiel, 
sur les surfaces dites <<~de premi\`ere esp\`ece~>>, 
pour les vecteurs {\em radiaux}, i.e. les vecteurs $v$ dont 
la g\'eod\'esique 
revient infiniment souvent dans un compact. 

Dans \cite{Schapira}, j'ai trait\'e le cas des surfaces hyperboliques 
g\'eom\'etriquement finies
(i.e. dont le groupe fondamental est de type fini), 
et j'ai montr\'e que la r\'eponse est 
toujours positive, i.e. les deux demi-horocycles $(h^sv)_{s\ge 0}$ et $(h^sv)_{s\le 0}$ 
sont toujours denses (ou non) simultan\'ement, 
hormis dans le cas (obstruction triviale, voir figure \ref{horocyclenondense}) 
de vecteurs $v$ 
dont l'un des demi-horocycles $(h^sv)_{s\ge 0}$ et $(h^sv)_{s\le 0}$ est 
dense dans $\mathcal{E}$, et l'autre sort de tout compact. 

Dans cette note, j'obtiens un panorama presque complet de la situation, 
sur des surfaces hyperboliques orient\'ees quelconques. 

Introduisons quelques d\'efinitions et notations. 
Soit $S$ une surface hyperbolique orient\'ee. Elle peut s'\'ecrire
$S=\Gamma \setminus \D$, o\`u $\Gamma$ est un groupe discret d'isom\'etries du disque 
hyperbolique $\D$, et $T^1S=\Gamma\setminus \D$. 
Si $u\in T^1 S$, et $\tilde{u}$ est un relev\'e de $u$ 
au fibr\'e unitaire tangent  $T^1\D$ 
du disque hyperbolique, nous notons  $u^-\in S^1$ (resp. $u^+$) l'extr\'emit\'e
n\'egative  (resp. positive) de la g\'eod\'esique $(g^t\tilde{u})_{t\in\R}$ 
d\'efinie par $\tilde{u}$ dans  
le bord \`a l'infini $S^1=\partial \D$ du disque hyperbolique.
L'ensemble limite $\Lambda_\Gamma\subset S^1$ est l'ensemble 
$\overline{\Gamma.o}\setminus\Gamma.o$ des points d'accumulation 
d'une orbite dans le bord. 
Dans toute cette note, $\Gamma$ est suppos\'e non-\'el\'ementaire, 
i.e. non virtuellement ab\'elien,
ce qui revient \`a dire
que $\#\Lambda_\Gamma=+\infty$. 
Nous aurons besoin d'orienter le bord $S^1$ dans le sens trigonom\'etrique. 

Un horocycle est un cercle (euclidien) de $\D$ tangent \`a $\partial \D$. 
Un horocycle instable est l'ensemble des vecteurs unitaires orthogonaux \`a un horocycle 
donn\'e, pointant vers l'ext\'erieur. 
Nous \'etudions ici le {\em flot horocyclique instable}. 
Ce flot, agissant sur $T^1\D$, 
a pour  orbites 
les horocycles instables, et il tourne les vecteurs 
d'une distance $|s|$ (pour la distance
riemannienne induite) sur l'horocycle instable, 
dans le sens des aiguilles d'une montre. 
Ce flot passe au quotient en le flot horocyclique instable de $T^1S$. 

Un vecteur $v\in T^1S$ est dit {\em radial} si sa g\'eod\'esique 
$(g^{-t}v)_{t\ge 0}$ revient 
infiniment souvent dans un compact (ici, $(g^t)_{t\in\R}$
 d\'esigne le flot g\'eod\'esique, qui
d\'eplace les vecteurs d'une distance $t$ le long de la
 g\'eod\'esique qu'ils d\'efinissent). 
Il est bien connu que si $v$ est radial, son horocycle 
$(h^sv)_{s\in\R}$ est dense dans l'ensemble non errant
 $\mathcal{E}$ du flot horocyclique, i.e.
l'ensemble $\mathcal{E}$ des vecteurs de $T^1S$ dont tout
 voisinage $V$ v\'erifie $h^{s_n}V\cap V\neq \emptyset $ pour une suite $s_n\to +\infty$. 
(C'est l'ensemble sur lequel se produit la dynamique int\'eressante.)

Le r\'esultat d'Hedlund s'\'etend alors: 

\begin{theo}\label{radial}Soit $S$ une surface
 hyperbolique non \'el\'ementaire orient\'ee. 
Soit $v\in T^1S$ un vecteur radial. 
Alors son demi-horocycle positif $(h^sv)_{s\ge 0}$ 
est dense dans l'ensemble non errant
$\mathcal{E}$ du flot horocyclique
si et seulement si $v^-$ n'est pas le premier point
 (dans le sens trigonom\'etrique) d'un
intervalle de $S^1\setminus \Lambda_\Gamma$. 
\end{theo}

Le r\'esultat ci-dessus est \'enonc\'e et d\'emontr\'e 
car sa preuve est simple et assez courte, 
et reprend
en les simplifiant des id\'ees d'Hedlund. 
Mais je d\'emontre ensuite un r\'esultat bien plus g\'en\'eral.

\begin{theo}\label{general}Soit $S$ une surface 
hyperbolique non \'el\'ementaire orient\'ee. 
Soit $v$ un vecteur  dont l'horocycle complet 
$(h^sv)_{s\in\R}$ est dense dans $\mathcal{E}$, et tel qu'il 
existe deux constantes $\Lambda>0$ et 
$0<\alpha_0\le \pi/2$, telles que le rayon g\'eod\'esique 
$(\pi(g^{-t}v))$, $t\ge 0$, croise une infinit\'e de g\'eod\'esiques 
ferm\'ees de longueur au plus $\Lambda$, avec un angle d'intersection 
au moins $\alpha_0$. 
Alors 
 les deux demi-orbites $(h^sv)_{s\ge 0}$ et 
$(h^sv)_{s\le 0}$ sont denses dans $\mathcal{E}$. 
\end{theo}

Un vecteur radial satisfait cette hypoth\`ese, 
car les g\'eod\'esiques ferm\'ees sont denses 
dans $\mathcal{E}$, et si $v$ est radial, 
$(g^{-t}v)$ revient infiniment souvent dans une
boule ferm\'ee de $\mathcal{E}$.  
On peut se convaincre que l'hypoth\`ese du th\'eor\`eme \ref{general} 
est alors v\'erifi\'ee, par exemple \`a l'aide d'une 
d\'ecomposition en pantalons de la surface. 

Il est \`a noter qu'une hypoth\`ese tr\`es proche 
est utilis\'ee dans un travail r\'ecent d'Omri
 Sarig \cite{Sarig} sur le flot horocyclique. 

On peut se demander si ce r\'esultat est  optimal.
Nous construisons un contre-exemple \`a un r\'esultat compl\`etement g\'en\'eral. 

\begin{theo}\label{contre-exemple} Il existe des surfaces hyperboliques non \'el\'ementaires 
orient\'ees contenant des
 vecteurs $v$ tels que $(h^sv)_{s\ge 0}$ est dense dans $\mathcal{E}$, 
$(h^sv)_{s\le 0}$ ne l'est pas, $v^-$ n'est pas une extr\'emit\'e d'un 
intervalle de $S^1\setminus\Lambda_\Gamma$, 
et $(g^{-t}v)_{t\ge 0}$ intersecte une infinit\'e de g\'eod\'esiques 
p\'eriodiques de longueur tendant vers l'infini. 

Suivant les exemples, l'angle d'intersection de $(g^{-t}v)_{t\ge 0}$
 avec ces g\'eod\'esiques
ferm\'ees peut \^etre uniform\'ement minor\'e ou tendre vers $0$. 
\end{theo}

Le paragraphe \ref{Preliminaires} est consacr\'e aux pr\'eliminaires, 
et les trois paragraphes 
suivants aux d\'emonstrations des trois r\'esultats ci-dessus. \\

Je remercie Yves Coud\`ene et Antonin Guilloux pour des discussions
 utiles li\'ees \`a ce travail.


\section{Pr\'eliminaires}\label{Preliminaires}


\subsection*{G\'eom\'etrie hyperbolique}
Le disque  hyperbolique $\D=D(0,1)$ est muni de la m\'etrique hyperbolique
$\frac{4dx^2}{(1-|x|^2)^2}$. Soit $o$ l'origine du disque, et $\pi:T^1\D\to \D$
la projection canonique. Le bord \`a l'infini de $\D$ est  
$S^1=\partial\D$. On note indiff\'eremment $d$ la distance 
riemannienne sur $\D$ et $T^1\D$. 

L'identification classique de $\D$ avec $\H=\R\times\R_+^*$ via 
l'homographie $z\mapsto i\frac{1+z}{1-z}$ permet d'identifier le groupe des 
isom\'etries pr\'eservant l'orientation de $\D$ avec 
 $PSL(2,\R)$ agissant par homographies sur $\H$. 
Cette action s'\'etend \`a $T^1\D$ (ou $T^1\H$), 
et devient simplement transitive; on identifie donc $T^1\D$ avec $PSL(2,\R)$.


Si $\Gamma\subset PSL(2,\R)$ est un groupe discret, son {\em ensemble limite} 
$\Lambda_\Gamma$ est d\'efini comme
$\Lambda_{\Gamma}=\overline{\Gamma. o}\setminus \Gamma. o \subset S^1$. 
C'est \'egalement le plus petit ferm\'e $\Gamma$-invariant non vide de $S^1$. 
Nous utiliserons souvent le fait que l'action de $\Gamma$ sur $\Lambda_\Gamma$ est 
minimale: pour tout $\xi\in\Lambda_\Gamma$, $\Gamma.\xi$ est dense dans $\Lambda_\Gamma$. 

Un point $\xi\in\Lambda_{\Gamma}$ est dit   {\em radial} 
s'il est limite d'une suite 
$(\gamma_n .o)$ de points de $\Gamma . o$ qui restent \`a distance hyperbolique born\'ee
du rayon g\'eod\'esique $[o\xi)$ reliant $o$ \`a $\xi$. 
Nous noterons 
 $\Lambda_{\rm rad}$ l'{\em ensemble limite radial}. 
L'ensemble des points de $\Lambda_\Gamma$ fix\'es par une isom\'etrie 
hyperbolique de $\Gamma$ est
inclus dans $\Lambda_{\rm rad}$.

Un {\em horocycle} de $\D$ est un cercle euclidien tangent \`a $S^1$; 
c'est \'egalement une ligne de niveau d'une fonction de Busemann.
Une {\em horoboule} est un disque (euclidien) bord\'e par un horocycle. 
Un point $\xi\in \Lambda_\Gamma$ est {\em horosph\'erique} si toute horoboule
centr\'ee en $\xi$ contient une infinit\'e de points de $\Gamma.o$. 
En particulier, $\Lambda_{\rm rad}$ est inclus dans l'ensemble 
des points limites horosph\'eriques, 
not\'e $\Lambda_{\rm hor}$. 

Une isom\'etrie de $PSL(2,\R)$ est dite {\em hyperbolique} si elle fixe exactement
deux points de $S^1$, {\em parabolique} si elle fixe exactement un point de $S^1$, 
et {\em elliptique} dans les autres cas. 
On note  $\Lambda_{\rm p}\subset \Lambda_{\Gamma}$ l'ensemble des points limites
{\em paraboliques}, i.e. les points fixes d'une isom\'etrie parabolique de $\Gamma$. 

Toute surface hyperbolique orient\'ee est le quotient 
  $S=\Gamma\backslash \D$ de $\D$ par un sous-groupe discret $\Gamma$ de $PSL(2,\R)$ 
sans \'el\'ement elliptique, et son fibr\'e unitaire tangent $T^1S=\Gamma\backslash T^1\D$
 s'identifie \`a 
$\Gamma\backslash PSL(2,\R)$.

Dans cette note, nous supposerons toujours $\Gamma$ {\em non \'el\'ementaire}, 
i.e.  $\#\Lambda_\Gamma=+\infty$.

Quand $S$ est compacte, alors $\Lambda_\Gamma=\Lambda_{\rm rad}=S^1$. 
La surface est dite {\em convexe-cocompacte} quand $\Gamma$ est finiment engendr\'e et
ne contient que des isom\'etries hyperboliques. 
Dans ce cas,  $\Lambda_{\Gamma}=\Lambda_{\rm rad}$ est strictement inclus dans $S^1$,
et  $\Gamma$ agit de mani\`ere cocompacte sur l'ensemble
$(\Lambda_{\Gamma}\times \Lambda_\Gamma)\setminus \mbox{Diagonale}\times\R\subset T^1\D$. 
Quand $S$  est de volume fini, ses bouts sont uniquement des pointes
 isom\'etriques \`a $\{z\in\mathbb{H}, |z|>1\}/<z\mapsto z+1>$, et 
$\Lambda_{\Gamma}=\Lambda_{\rm rad}\sqcup\Lambda_{\rm p}=S^1$. 

\subsection*{Flots g\'eod\'esique et horocyclique}

Une g\'eod\'esique hyperbolique de $\D$ est un diam\`etre
 ou un demi-cercle orthogonal \`a $S^1$. 
Un vecteur $v\in T^1\D$ est tangent \`a une unique g\'eod\'esique, et orthogonal \`a 
exactement deux horocycles passant par son point base, 
tangents \`a $S^1$ respectivement en $v^+$ et 
$v^-$. 
L'ensemble des vecteurs $w\in T^1\D$ tq $w^-=v^-$ et dont le point base appartient  
\`a ce dernier horocycle est {\em l'horocycle fortement instable}, ou 
vari\'et\'e fortement instable, de $v$.
On le note donc $W^{su}(v)=\{h^sv,s\in\R\}$.  
L'{\em horocycle fortement stable} est d\'efini de mani\`ere analogue. 

Le {\em flot g\'eod\'esique}   $(g^t)_{t\in\R}$  agit sur  $T^1\D$ 
en d\'epla\c cant un vecteur $v$ d'une distance $t$ 
le long de la g\'eod\'esique qu'il d\'efinit.
Dans l'identification de 
 $T^1\D$ avec $PSL(2,\R)$, ce flot correspond \`a l'action 
par multiplication \`a droite du sous-groupe
 \`a un param\`etre 
 $$
 \left\{a_t:=\left(\begin{array}{cc}
e^{t/2} & 0 \\
0 & e^{-t/2} \\
\end{array} \right),\,t\in\R
\right\}.
$$

Le {\em flot horocyclique instable}  $(h^s)_{s\in\R}$ agit sur $T^1\D$ 
en d\'epla\c cant un vecteur $v$ d'une distance $|s|$ le long de son horocycle 
fortement instable. Il y a deux orientations possibles pour un tel flot, 
et nous choisissons celle qui correspond \`a l'action 
\`a droite du groupe \`a un param\`etre 
 $$ 
\left\{n_s:=\left(\begin{array}{cc}
 1& 0 \\
s & 1 \\
\end{array} \right),\,s\in\R
\right\}
$$
sur $PSl(2,\R)$. Ce flot fait tourner les vecteurs 
le long de leur horocycle fortement instable, 
de sorte que $\{h^s v, \,s\in\R\}$ d\'ecrit tout l'horocycle fortement instable. 

De plus, pour tout  $s\in\R$ et $t\in\R$, ces flots 
g\'eod\'esique et horocyclique
satisfont la relation fondamentale suivante\,:
\begin{equation}\label{relationfondamentale}
 g^t\circ h^s=h^{se^t}\circ g^t\,.
 \end{equation}

\begin{rema}\label{orientation}\rm 
Avec notre choix d'orientation de $S^1$, quand $s\to +\infty$, si
 $u\in T^1\D$ et $u_s^+\in S^1$ est l'extr\'emit\'e positive de 
la g\'eod\'esique d\'etermin\'ee par  $h^su$, alors $u_s^+$ converge vers $u^-$, avec 
$u_s^+\ge u^-$ dans l'orientation trigonom\'etrique de $S^1$. 
\end{rema}

Ces deux actions \`a droite commutent avec l'action \`a gauche de $PSL(2,\R)$ sur 
lui-m\^eme par multiplication, de sorte qu'elles
 sont bien d\'efinies au quotient sur
$T^1 S\simeq \Gamma\backslash PSL(2,\R)$. 

\begin{defi}\label{nw} Soit $(\phi^t)_{t\in\R}$ un flot
agissant par hom\'eomorphismes sur un espace topologique $X$. 
L'{\em ensemble non errant} de $\phi$ est l'ensemble des 
 $x\in X$ tq pour tout voisinage $V$ de $x$, 
il existe une suite $t_n\to +\infty$ tq
 $\phi^{t_n} V\cap V\neq \emptyset$. 
\end{defi}


Nous renvoyons \`a \cite{Schapira} pour le lemme suivant. 

\begin{lemm}\label{nonwanderingsets}
L'ensemble non-errant du flot g\'eod\'esique agissant sur $T^1S$ est 
$$
\Omega:=\Gamma\backslash\left((\Lambda_\Gamma\times\Lambda_\Gamma)
\setminus\mbox{\rm Diagonal}\times\R\right).
$$
L'ensemble non errant du flot horocyclique agissant sur $T^1S$ est 
$$
\mathcal{E}:=\Gamma\backslash\left((\Lambda_\Gamma\times S^1)
\setminus\mbox{\rm Diagonal}\times\R\right).
$$ 
On a de plus
$$
\mathcal{E}=\cup_{s\in\R}h^s\Omega \quad \mbox{et } \quad
\mathcal{E}=\overline{\cup_{s\ge 0}h^s\Omega} 
$$ 
\end{lemm}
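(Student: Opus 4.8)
The plan is to prove everything in the boundary coordinates $T^1\D\cong (S^1\times S^1\setminus\mathrm{Diag})\times\R$, where $\tilde v\leftrightarrow(\tilde v^-,\tilde v^+,t)$ and $t$ is the signed distance of $\pi(\tilde v)$ from a fixed origin on the geodesic $(\tilde v^-\tilde v^+)$. In these coordinates $g^t$ translates only the $\R$-factor, $h^s$ fixes $\tilde v^-$ and moves $(\tilde v^+,t)$, and $\Gamma$ acts diagonally on $S^1\times S^1$. Both non-wandering sets are closed and invariant under the corresponding flow, and the projection $T^1\D\to T^1S$ is $\Gamma$-equivariant, so everything reduces to deciding, for a lift $\tilde v$, which endpoint configurations force a return $\gamma_n\,g^{t_n}\tilde v\to\tilde v$ (resp. $\gamma_n\,h^{s_n}\tilde v\to\tilde v$) with $\gamma_n\in\Gamma$ and $t_n\to+\infty$ (resp. $s_n\to+\infty$).

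First I would identify $\Omega$. For ``nonwandering $\Rightarrow$ both endpoints in $\Lambda_\Gamma$'' I argue by contraposition: if, say, $\tilde v^+\notin\Lambda_\Gamma$, then $\tilde v^+$ lies in the domain of discontinuity $S^1\setminus\Lambda_\Gamma$, on which $\Gamma$ acts properly discontinuously; for $t$ large the endpoints of $g^t V$ (for a small product neighbourhood $V$ of $\tilde v$) cluster near $\tilde v^+$, so only finitely many $\gamma$ can satisfy $\gamma\,g^tV\cap V\neq\emptyset$, and $v$ is wandering. The case $\tilde v^-\notin\Lambda_\Gamma$ is symmetric, using $t\to-\infty$ and the orientation-reversing symmetry $v\mapsto-v$ of the geodesic flow. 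For the reverse inclusion I use that the pairs of fixed points of hyperbolic elements are dense in $\Lambda_\Gamma\times\Lambda_\Gamma$ (a standard consequence of minimality together with the contraction dynamics of hyperbolic isometries); each such pair carries a periodic geodesic, which is trivially nonwandering, and since the nonwandering set is closed this yields all of $\Omega$.

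Next I would treat $\mathcal{E}$. The reverse inclusion is again a proper-discontinuity argument: since $h^s$ fixes $\tilde v^-$, the whole horocyclic orbit and its nearby vectors keep a negative endpoint close to $\tilde v^-$, so if $\tilde v^-\notin\Lambda_\Gamma$ then $v$ is horocyclic-wandering. For the direct inclusion I first do it for $\tilde v^-=\xi\in\Lambda_{\rm rad}$: radiality gives $\gamma_n\in\Gamma$ with $\gamma_n o\to\xi$ within bounded distance of the ray $[o\xi)$, while by Remark \ref{orientation} the base point of $h^s\tilde v$ slides toward $\xi$ as $s\to+\infty$; the elements $\gamma_n^{-1}$ then carry a deep portion of the horocycle back to a bounded neighbourhood of $\tilde v$, producing the return with $s_n\to+\infty$. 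Since $\Lambda_{\rm rad}$ is dense in $\Lambda_\Gamma$ and the nonwandering set is closed, this gives $\{\tilde v^-\in\Lambda_\Gamma\}$, which is exactly $\mathcal{E}$. The identity $\mathcal{E}=\bigcup_{s\in\R}h^s\Omega$ is then a point-set computation: $h^s$ fixes $\tilde v^-$ and $s\mapsto(h^s\tilde v)^+$ is a bijection of $\R$ onto $S^1\setminus\{\tilde v^-\}$, so for any $w$ with $w^-\in\Lambda_\Gamma$ one can pick $s$ with $(h^{-s}w)^+\in\Lambda_\Gamma$ (as $\Lambda_\Gamma$ is infinite), i.e. $w\in h^s\Omega$.

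Finally, for $\mathcal{E}=\overline{\bigcup_{s\ge0}h^s\Omega}$, the inclusion $\supseteq$ is clear since $\Omega\subseteq\mathcal{E}$ and $\mathcal{E}$ is closed and $h$-invariant. For density I use Remark \ref{orientation}: for fixed $v\in\Omega$ the half-orbit endpoints $\{(h^sv)^+:s\ge0\}$ fill exactly one of the two arcs of $S^1$ cut out by $\{v^-,v^+\}$, approaching $v^-$ from the trigonometric-positive side; letting $v^-$ and $v^+$ range over $\Lambda_\Gamma$ (with $v^+$ chosen far enough counterclockwise) these arcs sweep a dense subset of all admissible endpoint pairs, and taking the closure recovers all of $\mathcal{E}$. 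I expect the technical heart to be the recurrence step for the horocyclic flow: unlike the geodesic case, returning near $\tilde v$ cannot come from powers of a single element (an elementary group would make the horocycle escape), so one must select the generic returning elements $\gamma_n$ and control the induced error, which is where the radiality hypothesis is spent before density and closedness finish the argument. The passage to the closure in the last identity is the delicate point that already anticipates the role, in the main theorems, of limit points sitting at the trigonometric-first endpoint of a gap of $\Lambda_\Gamma$, for which the reachable arc degenerates.
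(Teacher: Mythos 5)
You should first note that the paper contains no proof of this lemma at all: it is quoted verbatim from \cite{Schapira} (``Nous renvoyons \`a \cite{Schapira} pour le lemme suivant''), where it rests on Eberlein's identification of the geodesic nonwandering set and on Dal'bo's characterisation for the horocyclic flow. So your argument must stand on its own, and most of it does. The treatment of $\Omega$ is the standard one and correct, modulo one slip: in Hopf coordinates the endpoints of $g^tV$ do not move (they are flow-invariant); what accumulates at $\tilde v^+$ are the \emph{base points} $\pi(g^t\tilde w)$ in $\overline{\D}$, and the finiteness of returning elements comes from the fact that $\Gamma$-translates of a compact subset of $\D$ accumulate only on $\Lambda_\Gamma$, so a closed neighbourhood of $\tilde v^+$ disjoint from $\Lambda_\Gamma$ meets only finitely many of them. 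The point-set identity $\mathcal{E}=\cup_{s\in\R}h^s\Omega$ (pick $s$ with $(h^{-s}w)^+\in\Lambda_\Gamma$, possible since $s\mapsto(h^{-s}w)^+$ sweeps $S^1\setminus\{w^-\}$ and $\Lambda_\Gamma$ is infinite) and the density of $\cup_{s\ge 0}h^s\Omega$ (perturbing $v^-$ inside $\Lambda_\Gamma$ so that the arc swept by $(h^{-s}\cdot)^+$, $s\ge 0$, captures a gap endpoint --- the only failure being exactly the ``premi\`ere extr\'emit\'e'' configuration of Figure \ref{horocyclenondense}) are both correct and elementary, as you say.

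The genuine gap is the recurrence step for $\tilde v^-\in\Lambda_{\rm rad}$, which you yourself flag as the technical heart but do not actually supply. Showing that ``$\gamma_n^{-1}$ carry a deep portion of the horocycle back to a bounded neighbourhood of $\tilde v$'' only proves that the orbit $(h^sv)$ meets a fixed compact set for arbitrarily large $s$; that places some omega-limit point of the orbit in the nonwandering set, not $v$ itself. Nonwandering demands, for \emph{every} small neighbourhood $V$ of $v$, some $u\in V$ with $h^{s_n}u\in V$ and $s_n\to+\infty$, and the radial returns give no control on the phase along the horocycle: already in the model case of a $g$-periodic vector $p$, the relation (\ref{relationfondamentale}) yields $h^{\sigma e^{nT}}p=g^{nT}h^{\sigma}p$, which accumulates on the periodic orbit but at an uncontrolled phase $g^cp$ rather than at $p$. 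The economical repair, with the tools this paper itself recalls, is to invoke the Hedlund--Dal'bo theorem stated right after the lemma ($v^-\in\Lambda_{\rm hor}$, in particular $v^-\in\Lambda_{\rm rad}$, implies the full horocycle $(h^sv)_{s\in\R}$ is dense in $\mathcal{E}$) and then upgrade density of the orbit through its own point to nonwandering: if $v^-\in\Lambda_{\rm p}$ the horocycle is compact, $v$ is $h$-periodic and trivially nonwandering; otherwise, if all returns had $s_n\to 0$ there would exist $\epsilon>0$ and a neighbourhood $V_0$ of $v$ disjoint from $\{h^sv:|s|\ge\epsilon\}$, contradicting density, because $\mathcal{E}\cap V_0$ contains vectors $u$ with $u^-\neq v^-$ ($\Lambda_\Gamma$ is perfect since $\Gamma$ is non-elementary) that cannot lie on the compact local segment $\{h^sv:|s|\le\epsilon\}$; injectivity of $s\mapsto h^sv$ then forces $|s_n|\to\infty$ along a subsequence, and $h^{s}V\cap V\neq\emptyset$ with $s\to-\infty$ is equivalent to the same with $s\to+\infty$. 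Without this step (or an honest quantitative construction \`a la Dal'bo replacing it), the inclusion of $\{\tilde v^-\in\Lambda_\Gamma\}$ in the horocyclic nonwandering set --- the one substantive assertion of the lemma --- remains unproved.
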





Rappelons que  $W^{su}(v)=\{h^s v,\,s\in\R\}$ est compact ssi $v^-\in \Lambda_{\rm p}$,
et dense in $\mathcal{E}$ ssi $v^-\in \Lambda_{\rm hor}$. 
Notons  $W^{su}_+(v)=\{h^s v,\,s\ge 0\}$ le demi horocycle positif de $v$.

Nous supposerons dans tout ce qui suit que $S^1$
 est orient\'e dans le sens trigonom\'etrique.

\subsection*{Trompettes}


Rappelons (voir par exemple \cite{Schapira}) les faits standards suivants. 
\begin{rema}\label{funnel}\rm Si la  surface $S$ vue comme $\Gamma\backslash\H$
 a une {\em trompette} isom\'etrique 
\`a $\{z\in\mathbb{H}, \,\mbox{Re}(z)\ge 0\}/\{z\mapsto az\}$, $a>1$,
la g\'eod\'esique  $Re(z)=0$ passe au quotient en une
 g\'eod\'esique p\'eriodique bordant la
trompette. Une g\'eod\'esique croisant cette g\'eod\'esique
 p\'eriodique et entrant dans la 
trompette n'en resortira jamais. 
En particulier, l'ensemble limite $\Lambda_\Gamma$ vu dans $\partial \H=\R\cup\{\infty\}$
n'intersecte pas $\R_+^*$.

Un  horocycle centr\'e dans $\R^*_+$, projet\'e sur $S$,
 restera dans la trompette sauf au plus pendant un intervalle de temps fini. 
Un horocycle centr\'e en $0$, vu dans $S$, aura une moiti\'e qui 
n'entrera pas dans la trompette, et l'autre c\^ot\'e qui restera dans
 la trompette sans jamais ressortir. 
\end{rema}

\begin{fait} \rm Dans le cas d'une surface hyperbolique 
g\'eom\'etriquement finie, 
les extr\'emit\'es d'un intervalle de $S^1\setminus \Lambda_\Gamma$ 
sont hyperboliques; 
ce sont les extr\'emit\'es de l'axe d'un relev\'e de 
la g\'eod\'esique bordant la trompette. 
Ce n'est pas n\'ecessairement le cas sur une surface quelconque 
(\footnote{On peut consid\'erer (exemple donn\'e par M. Peign\'e) 
pour s'en convaincre le groupe $\Gamma=<\alpha^n h \alpha^{-n},n\in\Z>$, 
o\`u $h$ et $\alpha$ sont deux isom\'etries hyperboliques en position Schottky, 
et $\alpha\notin \Gamma$, de sorte que ses points fixes $\alpha^-$ et $\alpha^+$
 sont les extr\'emit\'es d'un intervalle de $S^1\setminus\Lambda_\Gamma$, sans 
que cela corresponde \`a une trompette de la surface quotient.}).
\end{fait}

\begin{fait}\rm 
Si  $v^- \in \Lambda_{\Gamma}$ est la premi\`ere extr\'emit\'e d'un 
intervalle de 
  $S^1\setminus \Lambda_{\Gamma}$, alors $W^{su}_+(v)=\{h^sv, s\ge 0\}$ 
n'est pas dense dans $\mathcal{E}$. 
\end{fait}

\begin{figure}[ht!]
\begin{center}
\input{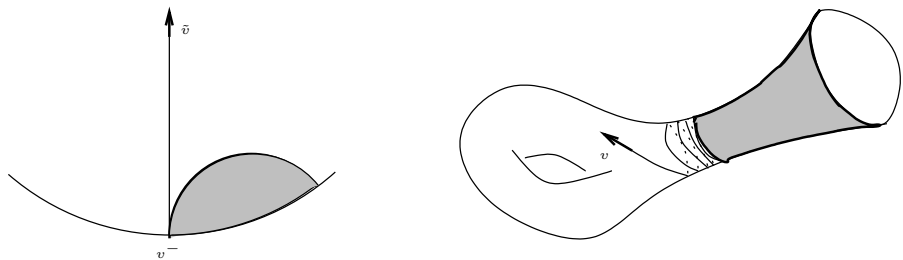}
\caption{Un vecteur $v$ dont l'horocycle positif $(h^sv)_{s\ge 0}$ 
n'est pas dense dans $\mathcal{E}$}
\label{horocyclenondense}
\end{center}
\end{figure}


\section{D\'emonstration du th\'eor\`eme \ref{radial}}

\subsection*{Points et vecteurs horocycliques \`a droite}

Si $v\in T^1\D$, soient $v^\pm$ ses extr\'emit\'es dans $\partial \D$, 
$Hor(v)\subset\D$ l'horoboule centr\'ee en $v^-$ et 
passant par le point base
$\pi(v)$ de $v$, et $Hor^+(v)\subset Hor(v)$ 
l'ensemble des points base des vecteurs
de $\cup_{t\ge 0}\cup_{s\ge 0}h^sg^{-t}v=\cup_{t\ge 0}\cup_{s\ge 0}g^{-t}h^sv$ 
(d'apr\`es la relation (\ref{relationfondamentale})).

\begin{defi} Si $v\in T^1\D$, et $\alpha>0$, 
le {\em c\^one} de largeur $\alpha$ autour de $v$
est l'ensemble 
$\mathcal{C}(v,\alpha)$ des points $x\in Hor(v)$ \`a distance (hyperbolique) 
au plus $\alpha$ du rayon g\'eod\'esique $(g^{-t}v)_{t\ge 0}$. Il s'agit de 
l'intersection de $Hor(v)$ avec un c\^one euclidien. 
\end{defi}

\begin{defi} Un vecteur $v\in T^1 S$ est {\em horocyclique \`a droite} 
s'il admet un relev\'e
 $\tilde{v}\in T^1 \D$, tel que pour tous  $\alpha>0$ et $D>0$, 
l'orbite $\Gamma.o$ intersecte la partie droite de l'horoboule 
 $Hor^+(g^{-D}\tilde{v})$ moins le c\^one 
$\mathcal{C}(g^{-D}\tilde{v}, \alpha)$. 

Un point $\xi\in \Lambda_\Gamma$ est {\em horocyclique \`a droite}
s'il existe $v\in T^1S$ horocyclique \`a droite tel que $\xi=v^-$. 
\end{defi} 

Pour m\'emoire rappelons qu'un vecteur $v$ est horosph\'erique 
si toutes les horoboules $Hor+(g^{-D}v)$ contiennent une infinit\'e de points
de $\Gamma.o$.

\begin{figure}[ht!]\label{Horocyclenondense}
\begin{center}\label{horocyclenondenseright-horocyclic}
\input{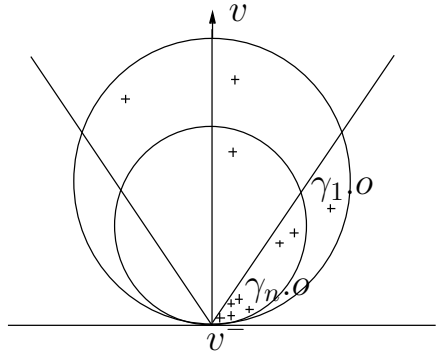}
\caption{Relev\'e d'un vecteur horocyclique \`a droite }
\end{center}
\end{figure}

Rappelons la proposition

\begin{prop}[\cite{Schapira}]\label{right_iff_dense}  
Soit $S$ une surface hyperbolique. 
Un vecteur $v\in T^1 S$ est horocyclique \`a droite si et seulement si 
   $(h^sv)_{s\ge 0}$ est dense dans  $\mathcal{E}$.
\end{prop}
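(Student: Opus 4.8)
The plan is to prove both implications through a single geometric dictionary that translates the approximation of an arbitrary target vector by the positive half-orbit into the condition defining right-horocyclicity. First I would make density concrete. By Lemma~\ref{nonwanderingsets} one has $\mathcal{E}=\Gamma\backslash\{\tilde w\in T^1\D:\tilde w^-\in\Lambda_\Gamma\}$, so $(h^sv)_{s\ge 0}$ is dense in $\mathcal{E}$ if and only if for every $\tilde w$ with $\tilde w^-\in\Lambda_\Gamma$ and every $\varepsilon>0$ there exist $\gamma\in\Gamma$ and $s\ge 0$ with $d(\gamma h^s\tilde v,\tilde w)<\varepsilon$. Since hyperbolic fixed points are dense in $\Lambda_\Gamma$ and $\Gamma$ acts minimally on $\Lambda_\Gamma$, it suffices to treat a dense family of targets, namely the $\tilde w$ with $\tilde w^-,\tilde w^+\in\Lambda_\Gamma$; this is the set I would fix once and for all, recovering the base point via the geodesic flow.

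The core is to match the two descriptions parameter by parameter. Writing $Hor^+(\tilde v)$ as the set of base points of the vectors $g^{-t}h^s\tilde v$, $t,s\ge 0$, and using the relation (\ref{relationfondamentale}) to exchange geodesic time for horocyclic length, I would show that an orbit point $\gamma.o$ lying in $Hor^+(g^{-D}\tilde v)\setminus\mathcal{C}(g^{-D}\tilde v,\alpha)$ is equivalent to the existence of $s\ge 0$ and $t\ge D$ for which $\gamma^{-1}$ carries $g^{-t}h^s\tilde v$ to a vector based at $o$, with negative endpoint $\gamma^{-1}\tilde v^-$, positive endpoint $\gamma^{-1}(h^s\tilde v)^+$, and base point within distance $\to 0$ (as $D\to\infty$) of the horocycle. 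The three geometric parameters then correspond to the three ingredients of the approximation: the depth $D$ governs the transverse precision $\varepsilon$; the width $\alpha$ of the excluded cone forces the two endpoints to stay a definite angle apart, which is exactly what lets the positive endpoint of the approximating half-horocycle vector be prescribed away from its negative endpoint; and the restriction to the \emph{right} part $Hor^+$, together with the orientation convention of Remark~\ref{orientation}, is what keeps the horocyclic parameter in the range $s\ge 0$. As $\gamma$ ranges over the orbit points furnished by the hypothesis, the directions $\gamma^{-1}\tilde v^-$ run through $\Gamma\tilde v^-$, dense in $\Lambda_\Gamma$ by minimality, so a single right-horocyclic condition at one lift already produces approximating configurations centred at every limit point.

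With the dictionary in hand both directions follow. For right-horocyclic $\Rightarrow$ density, I let $D,\alpha\to\infty$ and collect the $\Gamma$-translates of half-horocycle vectors produced by the hypothesis: their negative endpoints fill $\overline{\Gamma\tilde v^-}=\Lambda_\Gamma$, their positive endpoints fill (as $\alpha\to\infty$) a dense subset of $S^1$, and the free parameters $t\ge D$ and $s\ge 0$ sweep the remaining degree of freedom, so their closure is all of $\mathcal{E}$; by the reformulation of the first paragraph this is the density of $(h^sv)_{s\ge 0}$. For the converse I run the same dictionary backwards: given $D$ and $\alpha$, I prescribe a target $\tilde w$ with $\tilde w^-$ near $\tilde v^-$ and $\tilde w^+$ a limit point at angular distance $\ge\alpha$ from it, apply density to obtain $\gamma,s$, and unwind the equivalence to produce the required point of $\Gamma.o$ in $Hor^+(g^{-D}\tilde v)\setminus\mathcal{C}(g^{-D}\tilde v,\alpha)$. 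The main obstacle is establishing the dictionary of the second paragraph with honest constants: pinning the precise correspondence between horoball depth and approximation error and between cone width and endpoint separation, controlling the position of the base point along the horocycle through (\ref{relationfondamentale}), and carrying out the final limiting step that upgrades the approximation of targets whose positive endpoint is bounded away from the centre to arbitrary targets in $\mathcal{E}$.
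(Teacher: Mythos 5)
First, note that this paper contains no proof of Proposition~\ref{right_iff_dense} to compare against: it is recalled from \cite{Schapira}, where it is a substantive result, and the present note uses it as a black box (e.g.\ at the end of the proofs of Theorems~\ref{radial} and~\ref{general}). Your text must therefore stand on its own, and as written it is a program rather than a proof: both implications are routed through a ``dictionary'' whose validity is exactly the content of the proposition, and your closing sentence concedes that establishing it is ``the main obstacle''. The gap is not merely one of constants. Concretely: a point $\gamma.o\in Hor^+(g^{-D}\tilde{v})\setminus\mathcal{C}(g^{-D}\tilde{v},\alpha)$ is the base point of a vector $g^{-t}h^{s}\tilde{v}$ with $t\ge D$, $s\ge 0$, so applying $\gamma^{-1}$ produces vectors $\gamma^{-1}g^{-t}h^{s}\tilde{v}$ based at $o$; these involve the \emph{backward geodesic flow}, under which the closure of $(h^sv)_{s\ge 0}$ is not invariant (it is invariant only under $h^{s'}$, $s'\ge 0$). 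By the relation (\ref{relationfondamentale}), the actual half-orbit point is recovered as $h^{s}v=g^{t}\bar{u}_n$ with $t\to\infty$, where $\bar{u}_n$ is the renormalized vector, so accumulation of the $\bar{u}_n$ at a vector based at $o$ does not directly yield accumulation of $(h^sv)_{s\ge 0}$ on a prescribed target: one must match the three coordinates of an arbitrary target of $\Omega$ (negative endpoint, positive endpoint, position along the leaf) \emph{simultaneously}, while keeping the horocyclic parameter nonnegative. Your appeal to minimality (``the directions $\gamma^{-1}\tilde{v}^-$ run through $\Gamma\tilde{v}^-$, dense in $\Lambda_\Gamma$, so a single condition already produces configurations centred at every limit point'') controls only one of these coordinates and cannot suffice on its own: if endpoint density were enough, every $v$ with $v^-\in\Lambda_\Gamma$ would have a dense horocycle, whereas already for the full orbit density is equivalent to the strictly stronger condition $v^-\in\Lambda_{\rm hor}$. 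This upgrading step --- where the cone-exclusion, the orientation convention of Remark~\ref{orientation}, and the sign of $s$ must all interact --- is the theorem, and it is absent.

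The converse direction has a parallel, equally concrete gap. Density gives $\gamma$, $s\ge 0$ with $\gamma h^{s}\tilde{v}$ close to a lifted target, hence orbit points of $\pi(\tilde{w})$ close to the \emph{boundary} horocycle $\partial Hor(\tilde{v})$; but $\partial Hor(\tilde{v})$ lies outside the deeper horoball $Hor(g^{-D}\tilde{v})$ except at the tangency point $\tilde{v}^-$, so depth $\ge D$ does not come for free from such approximations, and ``unwind the equivalence'' hides the actual construction of targets that force deep, cone-avoiding points of $\Gamma.o$. Note also that $\alpha$ in $\mathcal{C}(g^{-D}\tilde{v},\alpha)$ is a hyperbolic distance to the ray $(g^{-t}v)_{t\ge 0}$, not an angular separation of endpoints at infinity; the two correspond only after a depth-dependent renormalization via (\ref{relationfondamentale}), which is precisely one of the quantitative lemmas you leave open. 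For calibration, look at the proof of Theorem~\ref{radial} in this paper: even in the easier radial case, producing points of $\Gamma.o\cap Hor^+(g^{-D}\tilde{v})\setminus\mathcal{C}(v,R)$ requires auxiliary conjugated hyperbolic isometries and Hausdorff convergence of half-horocycles to boundary arcs --- machinery of that kind, not parameter bookkeeping, is what your second paragraph would have to supply. What is sound in your proposal: the reduction via Lemma~\ref{nonwanderingsets} to targets in $\Omega$ (using $\mathcal{E}=\overline{\cup_{s\ge 0}h^s\Omega}$ and the $h^{s\ge 0}$-invariance of the closure) is correct, and you have identified the right objects; but between that reduction and the conclusion, the proof itself is missing.
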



Rappelons \'egalement la proposition suivante. 

\begin{prop}[\cite{Schapira}]\label{periodic} Soit $S$ une surface hyperbolique 
orient\'ee. Si $p\in\Omega$ est un vecteur p\'eriodique pour le flot g\'eod\'esique, 
son demi-horocycle positif
 $(h^s(p))_{s\ge 0}$ est
 dense dans l'ensemble non-errant  $\mathcal{E}$ du flot 
horocyclique ssi 
  $p^-$ n'est pas la premi\`ere extr\'emit\'e 
d'un intervalle de  $S^1\setminus\Lambda_{\Gamma}$. 
\end{prop}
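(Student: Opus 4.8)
Le plan est d'utiliser la caract\'erisation de la Proposition \ref{right_iff_dense}: montrer que $(h^sp)_{s\ge 0}$ est dense \'equivaut \`a montrer que $p$ est horocyclique \`a droite. Un sens est imm\'ediat: si $p^-$ est la premi\`ere extr\'emit\'e d'un intervalle de $S^1\setminus\Lambda_\Gamma$, le Fait rappel\'e ci-dessus affirme exactement que $W^{su}_+(p)$ n'est pas dense, d'o\`u la contrapos\'ee ``dense $\Rightarrow$ $p^-$ n'est pas premi\`ere extr\'emit\'e''. Il reste donc \`a \'etablir la r\'eciproque: si $p$ est p\'eriodique et si $p^-$ n'est pas une telle premi\`ere extr\'emit\'e, alors $p$ est horocyclique \`a droite.

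Comme $p$ est p\'eriodique, $\xi:=p^-$ est point fixe d'une isom\'etrie hyperbolique $\gamma_0\in\Gamma$ dont l'axe est la g\'eod\'esique de $\tilde p$; quitte \`a remplacer $\gamma_0$ par son inverse, je suppose que $\gamma_0$ contracte vers $\xi$, de sorte que $\gamma_0(g^{-t}\tilde p)=g^{-(t+\ell)}\tilde p$ o\`u $\ell$ est la longueur de translation. L'id\'ee centrale est d'exploiter l'autosimilarit\'e induite par $\gamma_0$ au voisinage de $\xi$: puisque $\gamma_0$ fixe $\xi$, pr\'eserve l'orientation de $S^1$ et envoie le rayon $(g^{-t}\tilde p)_{t\ge 0}$ sur lui-m\^eme translat\'e, il envoie le c\^one $\mathcal{C}(g^{-D}\tilde p,\alpha)$ sur $\mathcal{C}(g^{-D-\ell}\tilde p,\alpha)$ (m\^eme $\alpha$, car c'est une isom\'etrie fixant le rayon), la partie droite de $Hor^+(g^{-D}\tilde p)$ sur celle de $Hor^+(g^{-D-\ell}\tilde p)$, et l'horoboule $Hor(g^{-D}\tilde p)$ sur l'horoboule plus profonde $Hor(g^{-D-\ell}\tilde p)$. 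Par ailleurs, gr\^ace \`a la relation (\ref{relationfondamentale}), les ensembles $Hor^+(g^{-D}\tilde p)$ d\'ecroissent quand $D$ cro\^it, et l'on v\'erifie que la r\'egion ``partie droite priv\'ee du c\^one'' \`a profondeur $D'$ est incluse dans celle \`a profondeur $D<D'$. Il suffit donc, pour chaque $\alpha>0$ fix\'e, de produire un point de $\Gamma.o$ dans cette r\'egion pour une suite de profondeurs $D_k\to+\infty$: par embo\^itement on obtiendra la condition pour tout $D>0$, puis pour tout $\alpha$, c'est-\`a-dire l'horocyclicit\'e \`a droite de $p$.

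Pour produire ce point, je me place dans $\H$ avec $\xi=0$ et $p^+=\infty$, de sorte que $\gamma_0$ agit par $z\mapsto\mu z$ avec $0<\mu<1$, que le rayon vers $\xi$ est port\'e par l'axe imaginaire, et que la partie droite (le c\^ot\'e trigonom\'etrique balay\'e par le flot positif, cf. Remarque \ref{orientation}) correspond \`a $\mathrm{Re}(z)>0$. Un calcul \'el\'ementaire donne $\sinh d(z,\text{rayon})=\mathrm{Re}(z)/\mathrm{Im}(z)$, si bien que la r\'egion cherch\'ee, intersect\'ee avec l'horoboule, est le secteur euclidien $\{0<\arg z<\theta_\alpha\}$, o\`u $\cot\theta_\alpha=\sinh\alpha$. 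Or l'hypoth\`ese que $\xi$ n'est pas premi\`ere extr\'emit\'e d'un intervalle de $S^1\setminus\Lambda_\Gamma$ \'equivaut \`a dire (puisque $\Lambda_\Gamma$ est ferm\'e) que $\Lambda_\Gamma$ s'accumule sur $\xi$ par le c\^ot\'e trigonom\'etrique: il existe donc $\eta\in\Lambda_\Gamma$, $\eta>0$, arbitrairement proche de $0$. En choisissant un tel $\eta$ puis un point $w_0=\gamma'.o$ de $\Gamma.o$ assez proche de $\eta$, on a $0<\arg(w_0)<\theta_\alpha$, c'est-\`a-dire $w_0$ dans le secteur. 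Comme $\gamma_0^n w_0=\mu^n w_0$ conserve l'argument (donc reste dans le secteur) et tend vers $\xi$ en p\'en\'etrant dans des horoboules de plus en plus profondes centr\'ees en $\xi$, les points $\gamma_0^n w_0\in\Gamma.o$ fournissent, pour $n$ grand, les points voulus aux profondeurs $D_k\to+\infty$.

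L'obstacle principal sera la gestion rigoureuse des orientations et des c\^ot\'es: il faudra v\'erifier soigneusement, au moyen de la Remarque \ref{orientation}, que la partie droite de l'horoboule (celle contr\^ol\'ee par $(h^sp)_{s\ge 0}$) co\"incide bien avec le c\^ot\'e trigonom\'etrique de $\xi$, donc avec le c\^ot\'e o\`u la condition ``$\xi$ premi\`ere extr\'emit\'e d'un intervalle'' cr\'ee une obstruction, et que l'action de $\gamma_0$ pr\'eserve exactement la d\'ecomposition ``partie droite priv\'ee du c\^one''. Une fois ces conventions fix\'ees, les estim\'ees restantes (formule du $\sinh$, embo\^itement des horoboules, invariance du rapport $\mathrm{Re}/\mathrm{Im}$ sous $\gamma_0$) sont routini\`eres. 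Notons enfin que c'est ici que la p\'eriodicit\'e de $p$ intervient de fa\c con essentielle, via l'existence de $\gamma_0$ et l'autosimilarit\'e exacte qu'elle fournit, l\`a o\`u un vecteur seulement radial n'offrirait qu'une r\'ecurrence.
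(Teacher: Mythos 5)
Your proof is correct, but be aware that the paper you are working from does not actually prove Proposition \ref{periodic}: it is recalled from \cite{Schapira} without proof, and within the present paper it is subsumed by Theorem \ref{radial}, since a periodic vector is radial. The relevant comparison is therefore with the proof of Theorem \ref{radial}, and your route is genuinely different. The paper argues by extraction: it takes $\gamma_n^{-1}g^{-t_n}v$ converging to some $w_\infty$, checks that $w_\infty^-$ is again not a first endpoint, picks an auxiliary hyperbolic isometry $h\in\Gamma$ with both fixed points just to the right of $w_\infty^-$ (conjugated so that its axis is very far from $(w_\infty^-,w_\infty^+)$), and uses the Hausdorff convergence of the half-horocycles $\pi(\gamma_n^{-1}\cup_{s\ge 0}h^sv)$ to the semicircle $[w_\infty^-,w_\infty^+]$ to produce orbit points $g_n.o$ deep in $Hor^+$ and far from the cone. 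You replace all of this by the exact self-similarity available in the periodic case: after normalizing $p^-=0$, $p^+=\infty$, the primitive isometry $\gamma_0:z\mapsto \mu z$ preserves the Euclidean sectors $\{0<\arg z<\theta_\alpha\}$, which by $\sinh d(z,i\R_+)=\mathrm{Re}(z)/\mathrm{Im}(z)$ describe exactly ``right half-horoball minus cone'' at every depth, and a single orbit point $w_0$ near a limit point $\eta>0$ (such $\eta$ exist precisely because $p^-$ is not a first endpoint) yields the points $\gamma_0^n w_0=\mu^n w_0$ in the required region at all sufficiently large depths. Your argument is shorter and needs strictly less input: no extraction, no auxiliary closed geodesic (hence no appeal to the abundance of hyperbolic fixed points near $w_\infty^-$), only the accumulation of $\Lambda_\Gamma$ on the trigonometric side of $p^-$; the price, as you say yourself, is that it is rigidly tied to periodicity, whereas the paper's argument covers all radial vectors.

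Two verifications you postponed do go through and deserve a line each. First, the orientation: the homography $z\mapsto i\frac{1+z}{1-z}$ used in the paper is orientation-preserving, so the trigonometric side of $p^-=0$ in $\partial\H=\R\cup\{\infty\}$ is $\R_+^*$, and Remark \ref{orientation} then identifies the region swept by $(h^sg^{-t}p)_{s\ge 0,\,t\ge D}$ with the half-horoball on the side $\mathrm{Re}(z)>0$, as you assumed. Second, the cone $\mathcal{C}(g^{-D}\tilde p,\alpha)$ is defined with respect to the ray $(g^{-t}g^{-D}\tilde p)_{t\ge 0}$ and not the full geodesic; but since the distance to a subray dominates the distance to the whole geodesic, your sector condition $\arg z<\theta_\alpha$ (i.e.\ distance to $i\R_+$ greater than $\alpha$) a fortiori places the points outside every such cone, so the inequality goes the right way. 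With these conventions pinned down, the proof is complete; the nesting remark about depths is then not even needed, since the sector is depth-independent and $\mu^nw_0$ enters every horoball centered at $0$.
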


\begin{theo}[Hedlund, \cite{Hedlund}, thm 4.2 ] 
Soit $S=\Gamma\setminus\D$ une surface hyperbolique orient\'ee de premi\`ere 
esp\`ece, i.e. tq $\Lambda_\Gamma=S^1$. Soit
 $v\in T^1 S$ tq  $(g^{-t}v)_{t\ge 0}$ revient 
infiniment souvent dans un compact, i.e. tq $v^-\in\Lambda_{\rm rad}$. Alors 
son demi-horocycle positif  $(h^s v)_{s\ge 0}$ est dense dans $T^1 S$.
\end{theo}

\subsection{D\'emonstration du th\'eor\`eme \ref{radial}}

Nous allons d\'emontrer qu'un vecteur  radial $v\in T^1S$ 
tel que $v^-$ n'est pas la premi\`ere extr\'emit\'e d'un 
intervalle de $S^1\setminus \Lambda_\Gamma$ est horosph\'erique \`a droite. 
Pour ne pas alourdir les notations, en l'absence d'ambiguit\'e, nous noterons 
encore $v$ un relev\'e de $v$ \`a $T^1\D$. 
Soient donc $R>0$ et $D>0$ deux constantes assez grandes. Montrons que 
$\#\Gamma.o\cap Hor^+(g^{-D}v)\setminus \mathcal{C}(v,R)=+\infty$. 

$\bullet$ Comme $v$ est radial, il existe $R_0$, 
et une suite $\gamma_n.o\to v^-$, telle que 
$d(\gamma_n.o,(v^-,v])\le R_0$. Soit $g^{-t_n}v$, $t_n\to +\infty$, 
une suite de vecteurs de la g\'eod\'esique 
\`a distance au plus $R_0$ de $\gamma_n.o$. 
En faisant agir $\gamma_n^{-1}$, on obtient 
une suite de vecteurs $\gamma_n^{-1}g^{-t_n}v$,
 tous dans le fibr\'e unitaire tangent $T^1B(o,R_0)$ de la boule $B(o,R_0)$. 
Quitte \`a extraire,  cette suite converge 
 vers un vecteur $w_\infty\in T^1B(o,R_0)$, d'extr\'emit\'es $w_\infty^\pm\in \partial\D$. 

Remarquons au passage (voir figure \ref{dessin-theoreme-radial}) 
que $\gamma_n^{-1}.v^-\to w_\infty^-$, et 
que le demi horocycle de $\D$ 
$\pi(\gamma_n^{-1}.\cup_{s\ge 0}h^sv)$ converge (au sens de la convergence de Hausdorff 
des ferm\'es de $\overline{\D}$)
 vers le demi-cercle $[w_\infty^-,w_\infty^+]$ du 
bord (orient\'e dans le sens trigonom\'etrique). \\

$\bullet$ Comme $v^-$ n'est pas la premi\`ere
 extr\'emit\'e d'un intervalle 
de $S^1\setminus \Lambda_\Gamma$, et que les $\gamma_n$ pr\'eservent l'orientation, 
on v\'erifie ais\'ement que $w_\infty^-$ n'est pas non plus la premi\`ere extr\'emit\'e 
d'un intervalle de $S^1\setminus\Lambda_\Gamma$. \\

$\bullet$ Choisissons maintenant une isom\'etrie
 hyperbolique $h\in \Gamma$, 
de points fixes $h^\pm\in \Lambda_\Gamma$ tous 
deux situ\'es dans l'intervalle 
$[w_\infty^-,w_\infty^+]$, \`a droite donc de $w_\infty^-$, proches de $w_\infty^-$. 
L'axe de $h$ se projette sur $S$ en une g\'eod\'esique
 p\'eriodique. 
Soit $D_0>0$ la distance de cette g\'eod\'esique au
 projet\'e de l'origine $o$ du disque $\D$. 

Quitte \`a conjuguer $h$ par une autre isom\'etrie hyperbolique ayant l'un de
 ses points fixes \`a droite de $w_\infty^-$, il est possible de supposer que 
l'axe $(h^-,h^+)$ de l'isom\'etrie est \`a distance sup\'erieure \`a $2R+R_0+D_0$ 
de la g\'eod\'esique $(w_\infty^-,w_\infty^+)$. 
\\

$\bullet$ 
Comme le demi-horocycle $\pi(\cup_{s\ge 0}h^s\gamma_n^{-1}v)$ converge vers 
$[w_\infty^-,w_\infty^+]$ (au sens de la convergence
 de Hausdorff des ferm\'es de $\overline{\D}$),
 pour $n$ assez grand, ce demi-horocycle intersecte
 la g\'eod\'esique $(h^-h^+)$ 
en deux points $x_n^-,x_n^+$, avec 
$d(x_n^-,x_n^+)\to \infty$. 

On voit \'egalement que le point, disons $y_n$, 
du segment g\'eod\'esique 
$[x_n^-,x_n^+]$ le plus loin de l'horocycle 
$\pi(\cup_{s\ge 0}h^s\gamma_n^{-1}v)$ 
voit sa distance \`a cet horocycle tendre vers l'infini avec $n$. 
Or ce point est \`a distance au plus $D_0$ d'un
 point de l'orbite $\Gamma.o$, 
que nous notons $g_n.o$. 
Autrement dit, pour $n$ assez grand, $g_n.o$ 
est  dans l'horoboule $Hor^+(g^{-D}\gamma_n^{-1}v)$. 

Le point $y_n$ est sur $(h^-h^+)$, donc
\`a distance au moins $2R+R_0+D_0$ de la g\'eod\'esique
 $(w_\infty^-,w_\infty^+)$. 
On en d\'eduit que $g_n.o$ est \`a distance au moins 
$2R$ de cette m\^eme g\'eod\'esique. 
Pour $n$ assez grand, $g_n.o$ est donc \`a distance au moins $R$ 
de la g\'eod\'esique $(\gamma_n^{-1}.v^-,\gamma_n^{-1}.v^+)$.

\begin{figure}[ht!]
\begin{center}
\input{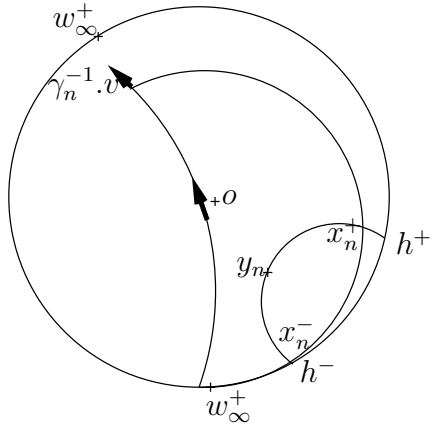}
\caption{D\'emonstration du th\'eor\`eme \ref{radial}}
\label{dessin-theoreme-radial}
\end{center}
\end{figure}

$\bullet$ Revenons par $\gamma_n$: notons $h_n^\pm=\gamma_n.h^\pm$, 
et $h_n=\gamma_n\circ h\circ \gamma_n^{-1}$ l'isom\'etrie hyperbolique associ\'ee. 
Les points $\gamma_n.g_n.o$, pour $n$ suffisamment grand, sont tous dans 
l'horoboule $Hor^+(g^{-D}v)$ sans \^etre dans le c\^one $\mathcal{C}(v,R)$.
 C'est bien le r\'esultat voulu.


\section{Et plus g\'en\'eralement}

Avant de commencer la preuve du th\'eor\`eme \ref{general}, 
rappelons quelques lemmes classiques de g\'eom\'etrie hyperbolique, 
dont on pourra trouver des \'enonc\'es dans \cite{GH} ou \cite{CDP} par exemple. 

\begin{lemm}\label{triangle} 
Soit $(a,b,c)$ un triangle hyperbolique (\'eventuellement infini). 
Si l'angle au sommet $a$ est minor\'e par $\alpha_0>0$, 
alors il existe une constante $C(\alpha_0)>0$, telle que
\begin{enumerate}
\item on a 
$d(a,b)+d(b,c)-C(\alpha_0)\le d(b,c)\le d(a,b)+d(a,c)$,
\item la distance de $a$ \`a $[b,c]$ est major\'ee par $C(\alpha_0)$.
\end{enumerate}
\end{lemm}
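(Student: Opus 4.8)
The plan is to reduce everything to the hyperbolic plane $\D$ and to handle the two assertions by elementary hyperbolic trigonometry, since the only hypothesis is a lower bound $\alpha\ge\alpha_0$ on the angle at $a$ (which in particular forces $a$ to be a genuine, non-ideal, vertex). The right-hand inequality in item~1 is nothing but the triangle inequality $d(b,c)\le d(a,b)+d(a,c)$ and needs no hypothesis.

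For the left-hand inequality, I would write $p=d(a,b)$, $q=d(a,c)$, $r=d(b,c)$ and start from the hyperbolic law of cosines at $a$,
\[
\cosh r=\cosh p\,\cosh q-\sinh p\,\sinh q\,\cos\alpha,
\]
which the product-to-sum formulas rewrite as
\[
\cosh r=\tfrac{1+\cos\alpha}{2}\cosh(p-q)+\tfrac{1-\cos\alpha}{2}\cosh(p+q).
\]
Since $\alpha\in[\alpha_0,\pi]$ and the cosine is decreasing there, one has $1-\cos\alpha\ge1-\cos\alpha_0>0$, while the first term is nonnegative; dropping it yields $\cosh r\ge\tfrac{1-\cos\alpha_0}{2}\cosh(p+q)$. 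Using $\cosh x\le e^{x}$ and $\cosh x\ge\tfrac12 e^{x}$ gives $e^{r}\ge\tfrac{1-\cos\alpha_0}{4}\,e^{p+q}$, hence
\[
r\ge p+q-\ln\frac{4}{1-\cos\alpha_0},
\]
which is item~1 with $C(\alpha_0)=\ln\frac{4}{1-\cos\alpha_0}$. Equivalently, this bounds the Gromov product $\tfrac12(p+q-r)$ based at $a$ by $\tfrac12 C(\alpha_0)$.

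For item~2 I would control the distance $h=d(a,\ell)$ from $a$ to the complete geodesic $\ell$ carrying $[b,c]$ via the angle of parallelism. Every geodesic ray issued from $a$ that meets $\ell$ lies inside the cone of angular width $2\Pi(h)$ bisected by the perpendicular from $a$ to $\ell$, where $\Pi(h)$ is the angle of parallelism, $\cos\Pi(h)=\tanh h$ (a one-line computation in the half-plane with $\ell$ the imaginary axis). As $b,c\in\ell$, the rays $ab$ and $ac$ lie in this cone, so $\alpha\le2\Pi(h)$; hence $\Pi(h)\ge\alpha_0/2$, i.e. $\tanh h\le\cos(\alpha_0/2)$ and $h\le\mathrm{arctanh}\bigl(\cos(\alpha_0/2)\bigr)$. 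When the foot of the perpendicular lies in $[b,c]$ this is exactly $d(a,[b,c])$, and that is automatic as soon as $b$ or $c$ is ideal (then $[b,c]=\ell$). The remaining configuration, namely foot outside the segment with both endpoints finite, is the one point needing care: there $d(a,[b,c])$ is the distance to the nearer endpoint, and the angle hypothesis keeps the direction $a\to b$ bounded away from the asymptotic direction of $\ell$ by $\alpha_0$, so the right-triangle relation $\cosh d(a,b)=\cosh h\,\cosh d(m,b)$ stays bounded. More cleanly, item~2 also follows directly from item~1 together with the standard fact, valid in a $\delta$-hyperbolic space (see \cite{GH}, \cite{CDP}), that $d(a,[b,c])$ and the Gromov product $\tfrac12(p+q-r)$ differ by at most a universal constant, since one always has $\tfrac12(p+q-r)\le d(a,[b,c])$ and the reverse inequality up to $O(\delta)$.

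I expect the main obstacle to be essentially bookkeeping rather than mathematical depth: organizing the possibly-infinite vertices uniformly and dispatching the foot-outside case in item~2. The trigonometric core of each assertion is routine once the law of cosines (item~1) and the angle of parallelism, or equivalently the Gromov-product comparison (item~2), are in place.
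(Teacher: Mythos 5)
Your proof is correct, but be aware that the paper itself contains no proof of this lemma: it is stated as classical, with pointers to \cite{GH} and \cite{CDP}, where it follows from the general theory of $\delta$-hyperbolic spaces (the angle at $a$ controls the Gromov product $(b|c)_a=\frac12\bigl(d(a,b)+d(a,c)-d(b,c)\bigr)$, and $(b|c)_a$ agrees with $d(a,[b,c])$ up to a universal constant). So any complete argument is necessarily a ``different route''; yours is a self-contained trigonometric one, and what it buys is explicit constants. Your law-of-cosines computation for item~1 is exactly right and yields $C(\alpha_0)=\ln\frac{4}{1-\cos\alpha_0}$; you also correctly read through the misprint in the statement (the left-hand side should of course be $d(a,b)+d(a,c)-C(\alpha_0)$). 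For item~2, the angle-of-parallelism bound $\tanh d(a,\ell)\le\cos(\alpha_0/2)$ is correct, and your fallback via the standard comparison $(b|c)_a\le d(a,[b,c])\le (b|c)_a+O(\delta)$, combined with item~1, cleanly disposes of the one delicate configuration (foot of the perpendicular outside the segment), which your first sketch only waves at: to make that sketch rigorous you would need the quantitative fact that when $b,c$ lie on the same side of the foot $m$, the direction from $a$ to a point at distance $s$ from $m$ makes angle $\theta(s)$ with $[a,m]$ where $\tan\theta(s)=\tanh s/\sinh h$, so the angle hypothesis forces $\theta(s)\le\Pi(h)-\alpha_0$ and hence $s$, and with it $d(a,b)$, uniformly bounded.

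One small slip to fix: $[b,c]=\ell$ only when \emph{both} $b$ and $c$ are ideal; with exactly one ideal endpoint $[b,c]$ is a ray and the foot-outside case can still occur. This matters for the paper, which applies the lemma precisely to triangles with one ideal vertex (e.g.\ $(v^-,I_v,I_w)$ in the proof of Lemme \ref{distance-majoree}). The case is nonetheless covered either by your Gromov-product route or by approximating the ideal vertex by points $b_n$ on the ray $[a,b)$: the angle at $a$ is unchanged, the segments $[b_n,c]$ converge locally uniformly to $[b,c]$, and the bound passes to the limit. With that repair, both items hold uniformly for possibly infinite triangles, as the statement requires.
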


La r\'eciproque \`a ce lemme est \'egalement vraie: 

\begin{lemm}\label{triangle-reciproque}
Soit $k>0$. Il existe des constantes $\alpha(k)>0$, 
$d(k)>0$ et $C(k)=C(\alpha(k))$ telles
que si $(a,b,c)$ est un triangle hyperbolique 
(\'eventuellement infini) v\'erifiant 
$d(a,[bc])\le k$ et $d(b,c)\ge d(k)$, alors 
l'angle au sommet $a$ du triangle $(a,b,c)$
 est au moins \'egal \`a $\alpha(k)$, et on a 
$$
d(a,b)+d(b,c)-C(k)\le d(b,c)\le d(a,b)+d(a,c)\,.
$$

\end{lemm}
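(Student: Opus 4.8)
Le plan est de ramener la minoration de l'angle en $a$ \`a l'estimation de l'angle de parall\'elisme dans un triangle rectangle, puis d'obtenir l'encadrement des longueurs \emph{gratuitement} \`a partir du lemme \ref{triangle}. D'abord, je noterais $p\in[b,c]$ le point du c\^ot\'e $[b,c]$ le plus proche de $a$, de sorte que $d(a,p)=d(a,[bc])\le k$. Le cas principal est celui o\`u $p$ est int\'erieur au segment, car alors le segment $[a,p]$ est orthogonal \`a $[b,c]$. Puisque $d(b,c)\ge d(k)$, l'un au moins des deux sous-segments $[b,p]$ et $[p,c]$ a une longueur sup\'erieure \`a $d(k)/2$; quitte \`a \'echanger $b$ et $c$, supposons $d(p,c)\ge d(k)/2$.

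Ensuite, dans le triangle rectangle $(a,p,c)$ d'angle droit en $p$, la trigonom\'etrie hyperbolique du triangle rectangle donne $\tan(\widehat{pac})=\tanh(d(p,c))/\sinh(d(a,p))$. Comme $d(a,p)\le k$ et $d(p,c)\ge d(k)/2$, j'en d\'eduis $\tan(\widehat{pac})\ge \tanh(d(k)/2)/\sinh(k)$. Il suffit alors de choisir $d(k)$ assez grand pour que $\tanh(d(k)/2)\ge 1/2$ (ce qui fixe $d(k)$ ind\'ependamment de $k$) pour conclure que l'angle en $a$ v\'erifie $\widehat{bac}\ge\widehat{pac}\ge \arctan\bigl(1/(2\sinh k)\bigr)=:\alpha(k)>0$. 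Cette quantit\'e $\arctan\bigl(1/(2\sinh k)\bigr)$ n'est autre qu'une version tronqu\'ee de l'angle de parall\'elisme $\Pi(k)$, qui reste strictement positive tant que $k$ est fini.

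Une fois la minoration $\widehat{bac}\ge\alpha(k)$ acquise, l'encadrement $d(a,b)+d(a,c)-C(k)\le d(b,c)\le d(a,b)+d(a,c)$ s'obtient imm\'ediatement en appliquant le lemme \ref{triangle} au triangle $(a,b,c)$ avec $\alpha_0=\alpha(k)$: c'est pr\'ecis\'ement ce lemme qui produit la constante $C(k)=C(\alpha(k))$ annonc\'ee. Autrement dit, tout le contenu g\'eom\'etrique r\'eside dans la minoration de l'angle, l'in\'egalit\'e des longueurs en d\'ecoulant formellement.

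Le point d\'elicat sera le cas o\`u la projection $p$ tombe sur une extr\'emit\'e de $[b,c]$, par exemple $p=b$ avec $d(a,b)\le k$ et angle obtus en $b$: la minoration de l'angle en $a$ peut alors \^etre mise en d\'efaut (l'angle tend vers $0$ lorsque l'angle en $b$ tend vers $\pi$, m\^eme avec $d(b,c)$ arbitrairement grand), tandis que l'encadrement des longueurs, lui, reste valide par simple in\'egalit\'e triangulaire (la constante $C=2k$ convient). Dans la situation o\`u ce lemme est employ\'e -- l'intersection \emph{transverse} du rayon g\'eod\'esique $(g^{-t}v)$ avec les g\'eod\'esiques ferm\'ees consid\'er\'ees -- la projection est int\'erieure, ce qui \'ecarte cette difficult\'e; c'est donc sur la position int\'erieure de $p$ que devra porter l'essentiel du soin dans la r\'edaction compl\`ete.
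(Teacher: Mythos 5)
The paper never proves this lemma: it is recalled as a classical fact of hyperbolic geometry, with pointers to \cite{GH} and \cite{CDP}, so there is no internal proof to compare yours against, and your write-up has to be judged on its own merits. On those merits it is the standard argument, correctly executed in the main case: for $p$ interior to $[b,c]$ the right-triangle relation $\tan(\widehat{pac})=\tanh(d(p,c))/\sinh(d(a,p))$ is the right formula, the monotonicity $\widehat{bac}\ge\widehat{pac}$ is valid because the ray from $a$ through $p$ lies inside the angle at $a$, the infinite triangles are covered by the angle-of-parallelism identity $\tan\Pi(x)=1/\sinh x$, and obtaining the length estimate by feeding $\alpha_0=\alpha(k)$ into Lemma \ref{triangle} is exactly how the constant $C(k)=C(\alpha(k))$ in the statement is meant to arise. (You also silently corrected the misprint in the displayed inequality, whose left-hand side should read $d(a,b)+d(a,c)-C(k)$; the same misprint occurs in Lemma \ref{triangle}.)

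Your caveat about the endpoint case is not pedantry but a genuine defect of the statement as printed, and you are right to flag it. If $p=b$ with an obtuse angle at $b$, the hyperbolic law of sines gives $\sin\widehat{bac}=\sin\widehat{abc}\,\sinh d(b,c)/\sinh d(a,c)$, and since $d(a,c)\ge d(b,c)-k$ this tends to $0$ as the angle at $b$ tends to $\pi$, while $d(a,[bc])\le k$ and $d(b,c)$ stays arbitrarily large; so no $\alpha(k)>0$ can work there, and the lemma must either be read with an interior-projection proviso or retain only the length estimate, which, as you observe, survives with $C=2k$. One correction of detail: the place where the paper actually invokes the lemma is not the transverse intersection of $(g^{-t}v)$ with the closed geodesics, but the triangle $(y_n,x_n,q_n)$ in the proof of Theorem \ref{general}; there the standing hypothesis $d(y_n,x_n)=d(y_n,(\gamma^-\gamma^+))\ge d(k)$ rules out the bad endpoint $p=x_n$, while $p=q_n$ would force $d(y_n,q_n)\le k$ and the length estimate then holds trivially. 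Taking $C(k)=\max(C(\alpha(k)),2k)$, which your argument effectively provides, therefore covers every configuration the paper needs.
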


Enon\c cons un dernier lemme, \'egalement utile, sous la forme dont nous aurons besoin. 

\begin{lemm}\label{triangle-fin} Il existe une constante $\delta>0$, ne d\'ependant que de la 
g\'eom\'etrie de $\D$, t.q. pour tout 
$\xi\in \partial \D$ et $p,q\in\D$ t.q $\beta_{\xi}(p,q)=0$, on peut trouver un
 <<~triangle int\'erieur~>> $\alpha,\beta,\gamma$ v\'erifiant 
$\alpha\in (\xi,q]$, $\beta\in(\xi,p]$, $\gamma\in[p,q]$, $d(\alpha,\beta)\le \delta$, 
     $d(\alpha,\gamma)\le\delta$, $d(\beta,\gamma)\le \delta$, et de plus
$\beta_{\xi}(\alpha,\beta)=0$, $d(\alpha,q)=d(\gamma,q)$, $d(\beta,p)=d(\gamma,p)$. 

Dans cette situation, on a \'egalement 
$d(p,q)=2d(p,\gamma)=2d(p,\beta)=2d(q,\gamma)=2d(q,\alpha)$.  
\end{lemm}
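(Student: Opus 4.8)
Le plan est de construire explicitement le triangle intérieur par symétrie, puis de majorer ses trois côtés par un calcul direct dans le demi-plan supérieur. D'abord, quitte à appliquer une isométrie de $\D$, je me ramène au modèle $\H$ avec $\xi=\infty$; l'hypothèse $\beta_\infty(p,q)=0$ signifie que $p$ et $q$ sont à la même hauteur, et après une homothétie $z\mapsto \lambda z$ (isométrie fixant $\infty$) je peux supposer $p=(-a,1)$ et $q=(a,1)$ avec $a>0$. Les rayons $(\xi,p]$ et $(\xi,q]$ sont alors les demi-droites verticales issues de $p$ et $q$, et $[p,q]$ est l'arc du demi-cercle $x^2+y^2=a^2+1$.

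Ensuite je définis $\gamma$ comme le milieu du segment géodésique $[p,q]$ (c'est le point le plus haut du demi-cercle, soit $(0,\sqrt{a^2+1})$, fixé par la symétrie $x\mapsto -x$ qui échange $p$ et $q$), et je pose $\alpha$ (resp. $\beta$) comme le point de $(\xi,q]$ (resp. $(\xi,p]$) situé à distance $d(p,q)/2$ de $q$ (resp. de $p$) en remontant vers $\xi$. Les égalités $d(\alpha,q)=d(\gamma,q)$, $d(\beta,p)=d(\gamma,p)$ et les relations de doublement $d(p,q)=2d(p,\gamma)=2d(q,\gamma)=2d(p,\beta)=2d(q,\alpha)$ sont alors immédiates (celles faisant intervenir $\gamma$ viennent de ce qu'il est le milieu, les autres de la définition de $\alpha,\beta$). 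L'égalité $\beta_\xi(\alpha,\beta)=0$ s'obtient sans coordonnées par additivité du cocycle de Busemann : $\beta_\xi(\alpha,\beta)=\beta_\xi(\alpha,q)+\beta_\xi(q,p)+\beta_\xi(p,\beta)=-\tfrac{d(p,q)}{2}+0+\tfrac{d(p,q)}{2}=0$, puisque remonter vers $\xi$ d'une distance $t$ fait varier la fonction de Busemann de $-t$.

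Le cœur de la preuve est la majoration \emph{uniforme} des trois côtés $d(\alpha,\beta)$, $d(\alpha,\gamma)$, $d(\beta,\gamma)$. Avec ces coordonnées on a $\cosh d(p,q)=1+2a^2$, d'où $\cosh\!\big(d(p,q)/2\big)=\sqrt{a^2+1}$ et donc la hauteur commune de $\alpha$ et $\beta$ vaut $h=\sqrt{a^2+1}+a$. La formule $\cosh d(z,w)=1+\frac{|z-w|^2}{2\,\mathrm{Im}(z)\,\mathrm{Im}(w)}$ donne alors
\[
\cosh d(\alpha,\beta)=1+\frac{2a^2}{(\sqrt{a^2+1}+a)^2},\qquad \cosh d(\alpha,\gamma)=1+\frac{a^2}{(\sqrt{a^2+1}+a)\sqrt{a^2+1}},
\]
et $d(\beta,\gamma)=d(\alpha,\gamma)$ par symétrie. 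Une vérification directe montre que chacune de ces expressions reste strictement inférieure à $3/2$, borne qu'elles approchent quand $a\to+\infty$; cela fournit la constante universelle $\delta=\mathrm{argcosh}(3/2)$, ne dépendant que de la géométrie (courbure $-1$) de $\D$.

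L'obstacle principal est précisément cette uniformité : il faut voir que les côtés du triangle intérieur ne grandissent pas lorsque $p$ et $q$ s'éloignent, c'est-à-dire lorsque $a\to+\infty$. C'est une manifestation quantitative de la finesse des triangles idéaux en courbure négative constante, et le calcul ci-dessus en donne la borne optimale. Le reste (vérification des égalités de distances et des relations de doublement) est routinier une fois $\gamma$ identifié au milieu de $[p,q]$.
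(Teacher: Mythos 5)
Votre d\'emonstration est correcte. La normalisation ($\xi=\infty$, $p=(-a,1)$, $q=(a,1)$), l'identification de $\gamma$ au milieu de $[p,q]$, soit $(0,\sqrt{a^2+1})$, la hauteur $e^{d(p,q)/2}=\sqrt{a^2+1}+a$ commune \`a $\alpha$ et $\beta$, et vos deux formules en $\cosh$ sont exactes; les majorations strictes par $3/2$ se v\'erifient bien (par exemple $(\sqrt{a^2+1}+a)^2>4a^2$ et $(\sqrt{a^2+1}+a)\sqrt{a^2+1}>2a^2$), l'argument de cocycle pour $\beta_\xi(\alpha,\beta)=0$ est valable, et les \'egalit\'es de distances sont immédiates par construction. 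La diff\'erence avec le papier est que celui-ci ne d\'emontre pas le lemme: il l'\'enonce comme un fait classique en renvoyant \`a \cite{GH} et \cite{CDP}, c'est-\`a-dire \`a la th\'eorie g\'en\'erale des espaces $\delta$-hyperboliques au sens de Gromov, o\`u l'existence d'un tel <<~triangle int\'erieur~>> de taille born\'ee est un avatar de la finesse des triangles (insize). Votre calcul, propre \`a la courbure constante $-1$ dans le mod\`ele $\H$, est plus \'el\'ementaire et plus pr\'ecis: il fournit la constante explicite $\delta=\mathrm{argcosh}(3/2)$, atteinte asymptotiquement quand $a\to+\infty$ (optimale pour cette construction), l\`a o\`u l'approche par hyperbolicit\'e de Gromov donne une constante non explicite mais valable dans tout espace Gromov-hyperbolique, donc plus robuste. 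Deux points mineurs \`a ajouter pour \^etre complet: la normalisation requiert aussi une translation horizontale $z\mapsto z+c$ (isom\'etrie fixant $\infty$), pas seulement l'homoth\'etie; et le cas d\'eg\'en\'er\'e $p=q$ (o\`u l'on prend $\alpha=\beta=\gamma=p$) m\'erite une ligne.
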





D\'emontrons maintenant le th\'eor\`eme \ref{general}. 


\begin{proof} L'id\'ee est la suivante. 
Soit $\gamma$ une isom\'etrie hyperbolique correspondant
 \`a l'une des g\'eod\'esiques p\'eriodiques, de longueur au plus $\Lambda$, 
 crois\'ees par $(g^{-t}v)_{t\ge 0}$. 
Cette isom\'etrie d\'eplace les points de l'orbite $\Gamma.o$. 
Etant donn\'ees deux constantes $D>0$ et $R>0$, cette isom\'etrie it\'er\'ee 
convenablement devrait envoyer un point de 
$\Gamma.o\cap Hor^\pm(g^{-D}v)\setminus \mathcal{C}(v,R)$ 
dans $\Gamma.o\cap Hor^\mp(g^{-D+Cste}v)\setminus\mathcal{C}(v,R-Cste')$.

Introduisons quelques notations. 
On rel\`eve $v$ sur $T^1\D$, et on le note encore $v$. 
Soit $y_0\in \Gamma.o$ un point de 
$Hor^\pm(g^{-D}v)\setminus \mathcal{C}(v,R)$, et soit $x_0$
son projet\'e sur l'axe $(\gamma^-,\gamma^+)$. 
Notons $x_n=\gamma^n(x_0)$ et $y_n=\gamma^n(y_0)$. 
Supposons que $\gamma^-\ge v^-\ge \gamma^+$, 
sur le cercle orient\'e dans le sens trigonom\'etrique,
 de sorte que $\gamma$ va avoir tendance \`a 
envoyer des points de $Hor^+$ vers $Hor^-$, et non l'inverse. 
Supposons par cons\'equent que $v^-$ est horosph\'erique \`a droite,
 et montrons qu'il est horosph\'erique \`a gauche. \\


Fixons d'abord $D,R$, grands devant toutes les constantes de l'\'enonc\'e,  et des
lemmes de la preuve: la borne $\Lambda$ sur les longueurs des g\'eod\'esiques 
p\'eriodiques crois\'ees, l'angle $\alpha_0$ qui minore l'angle d'intersection entre
$(g^{-t}v)$ et ces g\'eod\'esiques p\'eriodiques, et les
 constantes $C(\alpha_0)$ du lemme \ref{triangle} et
 $C(k),d(k)$ du lemme \ref{triangle-reciproque} pour $k=C(\pi/2)$  ci-dessus 
et la constante $ C_2(\alpha_0)$ 
apparaissant ci-dessous. 

Fixons ensuite un point $y_0\in \Gamma.o$ 
de $Hor^+(g^{-D}v)\setminus \mathcal{C}(v,R)$, 
et choisissons pour finir une isom\'etrie $\gamma$ telle que 
l'axe $(\gamma^-\gamma^+)$ de $\gamma$ se projette en une 
g\'eod\'esique ferm\'ee intersectant $(g^{-t}v)_{t\ge 0}$, 
telle que l'angle sur $\D$ entre l'axe de $\gamma$ et
 $(g^{-t}v)$ est au moins $\alpha_0$,  
et telle que $y_0$ est dans la composante connexe born\'ee 
de $Hor(v)\setminus(\gamma^-\gamma^+)$. 
Comme il existe une infinit\'e de telles g\'eod\'esiques $(\gamma^-\gamma^+)$ 
de plus en plus loin de $y_0$, nous supposerons que la distance de $y_0$ \`a 
$(\gamma^-\gamma^+)$ est 
sup\'erieure \`a la constante $d(k)$ du lemme \ref{triangle-reciproque}, 
pour $k=C(\pi/2)$, constante donn\'ee par le lemme \ref{triangle}. 

Appelons $w=h^sv$ le vecteur de l'horocycle de $v$ tel que 
$(g^{-t}w)$ intersecte $(\gamma^-\gamma^+)$ orthogonalement. 
Notons respectivement $I_v$ et $I_w$ les 
points d'intersection de $(g^{-t}v)$ (resp. $(g^{-t}w)$)
avec $(\gamma^-\gamma^+)$. \\

\begin{figure}[ht!]
\begin{center}
\input{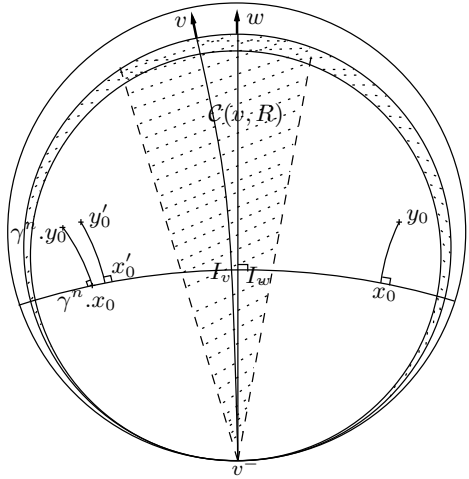}
\caption{D\'emonstration du th\'eor\`eme \ref{general}}
\end{center}
\end{figure}

\begin{lemm}\label{distance-majoree}
Si l'angle en $I_v$ entre $(\gamma^-\gamma^+)$ et $(g^{-t}v)_{t\ge 0}$ 
est minor\'e par $\alpha_0$, la distance entre $I_v$ et $I_w$
 est born\'ee par $C(\alpha_0)$. 
\end{lemm}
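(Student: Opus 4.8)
Le plan est de ramener le tout au lemme \ref{triangle}. Les vecteurs $v$ et $w=h^sv$ appartiennent au même horocycle fortement instable, donc partagent leur extrémité négative: $w^-=v^-=:\xi$. Les deux rayons géodésiques $(g^{-t}v)_{t\ge 0}$ et $(g^{-t}w)_{t\ge 0}$ aboutissent ainsi au même point $\xi$ du bord, et coupent la géodésique $L=(\gamma^-\gamma^+)$ respectivement en $I_v$ et $I_w$. Je considère le triangle idéal de sommets $\xi$, $I_v$ et $I_w$, dont le côté fini $[I_v,I_w]$ est porté par $L$, le côté $[\xi,I_v)$ par $(g^{-t}v)$ et le côté $[\xi,I_w)$ par $(g^{-t}w)$.

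Première observation: comme $(g^{-t}w)$ coupe $L$ orthogonalement en $I_w$, la droite $L$ est exactement la perpendiculaire abaissée de $I_v$ sur la géodésique portant $(g^{-t}w)$, et son pied est $I_w$. En effet $L$ passe par $I_v$ et rencontre cette géodésique à angle droit en $I_w$; par unicité de la perpendiculaire issue d'un point vers une géodésique, $L$ est cette perpendiculaire et $I_w$ en est le pied. Par conséquent $d(I_v,I_w)$ est précisément la distance de $I_v$ à la géodésique $(g^{-t}w)$, et donc aussi la distance de $I_v$ au rayon $[\xi,I_w)$, puisque le pied $I_w$ est justement l'extrémité de ce rayon.

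Deuxième observation: l'angle intérieur du triangle en $I_v$ vaut soit l'angle d'intersection $\theta$ entre $(g^{-t}v)$ et $L$, soit son supplément $\pi-\theta$; comme $0<\alpha_0\le \pi/2$ et $\theta\ge\alpha_0$, dans les deux cas cet angle intérieur est minoré par $\alpha_0$. Le lemme \ref{triangle}, point 2, appliqué avec $a=I_v$ (angle minoré par $\alpha_0$), $b=\xi$ et $c=I_w$, fournit alors que la distance de $I_v$ au côté opposé $[\xi,I_w)$ est majorée par $C(\alpha_0)$. Combinée à la première observation, ceci donne $d(I_v,I_w)\le C(\alpha_0)$, comme annoncé.

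Le point méritant le plus de soin est l'identification du pied de la perpendiculaire issue de $I_v$ avec $I_w$: elle repose entièrement sur le fait que $L$ passe par $I_v$ tout en étant orthogonale à $(g^{-t}w)$ en $I_w$, et sur l'unicité de la perpendiculaire commune en géométrie hyperbolique. Il faut aussi noter que le lemme \ref{triangle} borne la distance au rayon $[\xi,I_w)$ et non à la géodésique complète, ce qui est sans conséquence ici puisque le point le plus proche de $I_v$ sur cette géodésique, à savoir $I_w$, est précisément l'extrémité de ce rayon.
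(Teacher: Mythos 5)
Your proof is correct and takes essentially the same route as the paper: both apply part 2 of Lemma \ref{triangle} to the ideal triangle with vertices $v^-$, $I_v$, $I_w$, using the angle bound $\alpha_0$ at $I_v$, and then use the right angle at $I_w$ to identify the distance from $I_v$ to the geodesic through $v^-$ and $I_w$ with $d(I_v,I_w)$. Your additional checks --- that the interior angle at $I_v$ is either $\theta$ or $\pi-\theta\ge\pi/2\ge\alpha_0$, and that the foot $I_w$ of the perpendicular lies on the ray $[\xi,I_w)$ rather than merely on the full geodesic --- simply make explicit two small points the paper leaves implicit.
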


\begin{proof} En consid\'erant le triangle infini $v^-,I_v,I_w$, 
par le lemme \ref{triangle}, 
on obtient $d(I_v,(v^-I_w))\le C(\alpha_0)$. 
Comme l'angle en $I_w$ entre $(I_vI_w)$ et $(v^-I_w)$ vaut $\pi/2$, 
cette distance est \'egale \`a $d(I_v,I_W)$. Le lemme est d\'emontr\'e.
\end{proof}

\begin{lemm}\label{lemme-cle} Soit $x_0$ le projet\'e de $y_0$ sur $(\gamma^-\gamma^+)$, 
et $x_n=\gamma^nx_0$, $y_n=\gamma^ny_0$. 
Il existe des constantes $C_2(\alpha_0)$ et $R(\alpha_0)$, 
ne d\'ependant que de $\alpha_0$, telle que si $d(x_n,I_w)\ge R(\alpha_0)$, 
et $d(x_n,I_v)\ge R(\alpha_0/4)$, alors on a 
\begin{eqnarray*}
d(y_0,x_0)+d(x_n,I_w)-C_2(\alpha_0)& \le & d(y_n,(g^{-t}w)_{t\ge 0}) \le  d(y_0,x_0)+d(x_n,I_w)\\
 d(y_0,x_0)+d(x_n,I_v)-C_2(\alpha_0)& \le & d(y_n,(g^{-t}v)_{t\ge 0})  \le  d(y_0,x_0)+d(x_n,I_v)
\end{eqnarray*}
Par cons\'equent, on a 
$$
\left|d(y_n,(g^{-t}w)_{t\in\R})-d(y_n,(g^{-t}v)_{t\in\R})\right|\le 
d(I_v,I_w)+C_2(\alpha_0)\le C_1(\alpha_0)+C_2(\alpha_0)\,.
$$
\end{lemm}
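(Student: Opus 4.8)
L'idée est de traiter séparément les deux rayons $(g^{-t}v)_{t\ge 0}$ et $(g^{-t}w)_{t\ge 0}$ en estimant d'abord la distance de $y_n$ au point d'intersection $I_v$, resp. $I_w$, avec l'axe, puis en contrôlant le déplacement du pied de perpendiculaire le long de chaque rayon. Comme $\gamma$ préserve l'axe $(\gamma^-\gamma^+)$ ainsi que chacune de ses deux composantes, le point $x_n=\gamma^nx_0$ reste le projeté orthogonal de $y_n=\gamma^ny_0$ sur l'axe, le segment $[y_n,x_n]$ reste orthogonal à l'axe, et $d(y_n,x_n)=d(y_0,x_0)$. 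Posons $r=d(y_0,x_0)$, $L_v=d(x_n,I_v)$ et $L_w=d(x_n,I_w)$. Les majorations sont alors immédiates: puisque $I_v$ (resp. $I_w$) appartient au rayon positif, le chemin $y_n\to x_n\to I_v$ (resp. $\to I_w$), concaténation de la perpendiculaire et d'un arc de l'axe, donne par inégalité triangulaire $d(y_n,(g^{-t}v)_{t\ge0})\le r+L_v$ et $d(y_n,(g^{-t}w)_{t\ge0})\le r+L_w$.

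Pour les minorations, j'estimerais d'abord $d(y_n,I_v)$ et $d(y_n,I_w)$ via les triangles rectangles $(y_n,x_n,I_v)$ et $(y_n,x_n,I_w)$, dont l'angle en $x_n$ vaut $\pi/2$: le lemme \ref{triangle} (appliqué avec l'angle $\pi/2$) fournit $d(y_n,I_v)\ge r+L_v-C(\pi/2)$ et $d(y_n,I_w)\ge r+L_w-C(\pi/2)$. Les hypothèses $d(x_n,I_v)\ge R(\alpha_0/4)$ et $d(x_n,I_w)\ge R(\alpha_0)$ servent à garantir que ces triangles, et les triangles auxiliaires ci-dessous, sont assez grands pour que les contrôles d'angle issus du lemme \ref{triangle-reciproque} soient licites, la dégradation de $\alpha_0$ en $\alpha_0/4$ n'étant qu'un artefact des constantes de finesse. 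Il reste à remplacer $I_v$, $I_w$ par les véritables pieds de perpendiculaire $z_v$, $z_w$ de $y_n$ sur les rayons. C'est ici qu'intervient de manière essentielle l'hypothèse d'angle: comme $y_n$ et le rayon $(g^{-t}v)_{t\ge0}$ sont de part et d'autre de l'axe et que ce rayon fait avec l'axe un angle au moins $\alpha_0$, l'angle en $I_v$ du triangle rectangle $(y_n,I_v,z_v)$ est au moins $\alpha_0$; la trigonométrie hyperbolique donne $\tanh d(I_v,z_v)\le\cos\alpha_0$, d'où $d(I_v,z_v)\le\mathrm{arctanh}(\cos\alpha_0)=:C(\alpha_0)$. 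Par inégalité triangulaire inverse on en tire $d(y_n,(g^{-t}v)_{t\ge0})=d(y_n,z_v)\ge d(y_n,I_v)-C(\alpha_0)\ge r+L_v-C_2(\alpha_0)$ avec $C_2(\alpha_0)=C(\pi/2)+C(\alpha_0)$. Pour le rayon orthogonal, l'angle en $I_w$ entre $[I_w,y_n]$ et le rayon est au moins $\pi/2$ (toujours parce que $y_n$ et ce rayon sont de part et d'autre de l'axe): le pied de perpendiculaire quitte donc le rayon et le point du rayon le plus proche de $y_n$ est $I_w$ lui-même, ce qui donne directement $d(y_n,(g^{-t}w)_{t\ge0})=d(y_n,I_w)\ge r+L_w-C_2(\alpha_0)$.

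Enfin, la dernière inégalité s'obtient en soustrayant les deux encadrements, puis en majorant $|L_w-L_v|=|d(x_n,I_w)-d(x_n,I_v)|\le d(I_v,I_w)$, ce dernier étant borné par $C_1(\alpha_0)$ grâce au lemme \ref{distance-majoree}. La position de $y_0$, dans la partie droite de $Hor^+(g^{-D}v)$ et hors du cône $\mathcal{C}(v,R)$, assure que les pieds de perpendiculaire $z_v$, $z_w$ tombent sur les rayons positifs, de sorte que la distance aux géodésiques complètes $(g^{-t}v)_{t\in\R}$, $(g^{-t}w)_{t\in\R}$ coïncide avec la distance aux demi-géodésiques; on peut donc écrire la conclusion pour $t\in\R$. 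L'obstacle principal est le contrôle du pied de perpendiculaire $z_v$ sur le rayon \emph{non} orthogonal: c'est la seule étape où la minoration d'angle $\alpha_0$ est réellement utilisée, tout le reste se réduisant à des inégalités triangulaires et aux lemmes \ref{triangle} et \ref{triangle-reciproque}.
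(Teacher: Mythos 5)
Votre sch\'ema g\'en\'eral suit celui du texte --- majorations par in\'egalit\'e triangulaire en passant par $x_n$ puis $I_v$ ou $I_w$, minorations de $d(y_n,I_w)$ et $d(y_n,I_v)$ via le triangle rectangle en $x_n$ et le lemme \ref{triangle}, combinaison finale avec le lemme \ref{distance-majoree} --- mais l'\'etape cl\'e, celle qui remplace $I_v$ (resp. $I_w$) par le vrai point le plus proche sur le rayon, repose sur une affirmation fausse. Vous dites que $y_n$ et le rayon $(g^{-t}v)_{t\ge 0}$ sont de part et d'autre de l'axe: or ce rayon \emph{traverse} l'axe en $I_v$. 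Seule la sous-demi-g\'eod\'esique allant de $I_v$ vers $v^-$ est du c\^ot\'e oppos\'e \`a $y_n$; le segment initial, de $\pi(v)$ \`a $I_v$, est du m\^eme c\^ot\'e que $y_n$, puisque $y_0$ (donc $y_n$, car $\gamma$ pr\'eserve chaque demi-plan bord\'e par son axe) est dans la composante born\'ee de $Hor(v)\setminus(\gamma^-\gamma^+)$, laquelle contient aussi $\pi(v)$. Vos minorations d'angle en $I_v$ (au moins $\alpha_0$) et en $I_w$ (au moins $\pi/2$) ne contr\^olent donc que la distance \`a la partie lointaine du rayon; le cas o\`u le point le plus proche de $y_n$ tombe sur le segment initial n'est pas trait\'e, et rien dans votre argument ne l'exclut. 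La m\^eme objection frappe votre derni\`ere affirmation selon laquelle les pieds de perpendiculaire tombent n\'ecessairement sur les rayons positifs.

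Le sympt\^ome r\'ev\'elateur est que votre \'etape centrale n'utilise jamais les hypoth\`eses $d(x_n,I_w)\ge R(\alpha_0)$ et $d(x_n,I_v)\ge R(\alpha_0/4)$ (vous invoquez le lemme \ref{triangle-reciproque} en passant, mais ne l'appliquez nulle part). C'est pr\'ecis\'ement l\`a que la preuve du texte les emploie: le lemme \ref{triangle} appliqu\'e au triangle $(y_n,I_w,x_n)$, rectangle en $x_n$, montre que si $d(x_n,I_w)$ est assez grand, l'angle en $I_w$ entre $[I_w,x_n]$ et $[I_w,y_n]$ est inf\'erieur \`a $\alpha_0/4$; autrement dit $[I_w,y_n]$ est presque align\'e avec l'axe. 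L'angle en $I_w$ (resp. $I_v$) entre $[I_w,y_n]$ et \emph{chacune} des deux directions de la g\'eod\'esique de $w$ (resp. de $v$) est alors minor\'e par $\pi/2-\alpha_0/4$ (resp. $3\alpha_0/4$), quelle que soit la position du projet\'e $p$ de $y_n$ sur la g\'eod\'esique compl\`ete, y compris s'il tombe du c\^ot\'e de $y_n$. Le lemme \ref{triangle} dans le triangle $(p,I_w,y_n)$ donne alors $d(y_n,p)\ge d(y_n,I_w)+d(I_w,p)-C(\pi/2-\alpha_0/4)$, ce qui couvre uniform\'ement les deux sous-rayons et ach\`eve la minoration. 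Votre calcul trigonom\'etrique $\tanh d(I_v,z_v)\le \cos\alpha_0$ est correct, mais seulement sous l'hypoth\`ese que $z_v$ est sur la sous-demi-g\'eod\'esique lointaine: il faut d'abord l'argument d'alignement ci-dessus pour exclure l'autre cas, et c'est lui --- et non la s\'eparation par l'axe --- qui constitue le v\'eritable usage quantitatif de la minoration d'angle $\alpha_0$.
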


\begin{proof} Le troisi\`eme encadrement d\'ecoule directement des deux premiers 
et du lemme \ref{distance-majoree}. Il suffit de d\'emontrer les deux premiers encadrements. 

Dans les deux cas, les majorations de droite d\'ecoulent
 imm\'ediatement de l'in\'egalit\'e triangulaire standard, et 
du fait que $d(x_n,y_n)=d(x_0,y_0)$.

Le triangle $y_n,x_n,I_w$ est rectangle en $x_n$,
 de sorte que par le lemme \ref{triangle}, 
$d(y_n,I_w)\ge d(y_n,x_n)+d(x_n,I_w)-C(\pi/2)=d(y_0,x_0)+d(x_n,I_w)-C(\pi/2)$. 
De la m\^eme mani\`ere, $d(y_n,I_v)\ge d(y_0,x_0)+d(x_n,I_v)-C(\alpha_0)$. 

Il reste \`a v\'erifier que la distance de $y_n$ \`a $(g^{-t}w)_{t\ge 0}$ 
(resp. $(g^{-t}v)_{t\ge 0}$), est, \`a des constantes uniformes pr\`es, 
r\'ealis\'ee par $d(y_n,I_w)$ (resp. $d(y_n,I_v)$).

Soit $p$ le projet\'e de $y_n$ sur $(g^{-t}w)_{t\in\R}$. 
Supposons que $d(x_n,I_w)\ge C(\alpha_0/4)$.  
Le lemme \ref{triangle} dans le triangle
$y_n, I_w, x_n$ implique alors que l'angle en $I_w$ entre $[I_w,x_n]$ et $[I_w,y_n]$ est 
inf\'erieur \`a $\alpha_0/4$. 

Alors, l'angle en $I_w$ entre $[I_wp]$ et $[I_w,y_n]$ est sup\'erieur \`a $\pi/2-\alpha_0/4>0$. 
Le lemme \ref{triangle} dans le triangle $(p,I_w,y_n)$ donne alors 
$d(y_n,p)\ge d(y_n, I_w)+d(I_w,p)-C(\pi/2-\alpha_0/4)$. 
Par cons\'equent, on a d\'emontr\'e que 
$d(y_n,(g^{-t}w)_{t\ge 0})=d(y_n,p)\ge d(y_n,I_w)-C(\pi/2-\alpha_0/4)\ge d(y_0,x_0)+d(x_n,I_w)-C(\pi/2)-C(\pi/2-\alpha_0)$.

Le m\^eme raisonnement en rempla\c cant $I_w$ par $I_v$ donne 
$d(y_n, (g^{-t}v)_{t\ge 0})\ge d(x_0,y_0)+d(x_n,I_v)-C(\alpha_0)-C(3\alpha_0/4)$. 
Ceci conclut la preuve du lemme. 
\end{proof}


Achevons maintenant la d\'emonstration du th\'eor\`eme. 
Rappelons que $y_0\in \Gamma.o$ est dans $Hor^+(g^{-D}v)\setminus \mathcal{C}(v,R)$, et
que nous souhaitons d\'emontrer que pour $n$ judicieusement choisi, $\gamma^n.y_0$ est
dans $Hor^-(g^{-D\pm cste}v)\setminus \mathcal{C}(v,R\pm Cste)$. 
Rappelons \'egalement que $w$ est le vecteur de $(h^sv)_{s\in\R}$ tq $(v^-w^+)$ est
orthogonale \`a $(\gamma^-\gamma^+)$. 

Soient $y'_0$ et $x'_0$ les sym\'etriques respectivement de 
$y_0$ et $x_0$ par rapport \`a la g\'eod\'esique $(v^-w^+)$. 
Comme les it\'er\'es $\gamma^n.x_0$ sont ses translat\'es d'une 
distance $l(\gamma)\le \Lambda$ sur l'axe $(\gamma^-,\gamma^+)$, 
il existe $n\ge 1$, tel que $d(\gamma^nx_0,x'_0)\le \Lambda$, et 
$d(\gamma^nx_0,x_0)\ge d(x'_0,x_0)$. 
Par sym\'etrie, on a $d(y'_0,\partial Hor(v))=d(y_0, \partial Hor(v))$ 
et $d(y'_0,(g^{-t}v)_{t\ge 0})=d(y_0,(g^{-t}v)_{t\in\R})$.

Notons $\mathcal{H}=Hor(v)$ et 
comparons d'abord $d(y_0,\partial \mathcal{H})\ge D$ et $d(y_n,\partial\mathcal{H})$. 
Par sym\'etrie, on a bien s\^ur $d(y'_0,\partial\mathcal{H})\ge D$. 
Comme $x_n=\gamma^n.x_0$ et $x'_0$ sont \`a distance au plus $\Lambda$,
on en d\'eduit que $|d(x_n,\partial\mathcal{H})-d(x'_0,\partial\mathcal{H})|\le \Lambda$.
Nous allons chercher \`a minorer 
$d(y_n,\partial\mathcal{H})-d(y'_0,\partial\mathcal{H})$ par
une constante uniforme, ce qui donnera $d(y_n,\partial\mathcal{H})\ge D-cste$ 
pour $D$ assez grand.


Notons respectivement $q_n$ et $q'$ les projet\'es de 
$y_n=\gamma^n.x_0$ et de $y'_0$ sur $\partial\mathcal{H}$. 
Si $d(y_n,\partial\mathcal{H})\ge d(y'_0,\partial\mathcal{H})\ge D$, c'est parfait. 
Supposons donc que $d(y_n,\partial\mathcal{H})\le  d(y'_0,\partial\mathcal{H})$, 
et montrons que 
cette distance ne peut pas \^etre trop petite devant $D$. 

\begin{figure}[ht!]
\begin{center}
\input{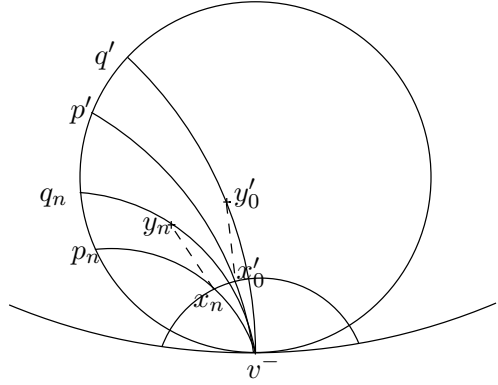}
\caption{D\'emonstration du th\'eor\`eme \ref{general}}
\end{center}
\end{figure}
Le triangle $(y_n,x_n,v^-)$ a un angle sup\'erieur \`a $\pi/2$ en $x_n$ (car $x_n$ est 
le projet\'e de $y_n$ sur $(\gamma^-,\gamma^+)$ qui intersecte $[y_n,v^-)$). 
D'apr\`es le lemme \ref{triangle}, on a $d(x_n,[y_n,v^-))\le C(\pi/2)$.
En consid\'erant le triangle $(q_n,x_n,v^-)$, 
on en d\'eduit en particulier que la distance de $y_n$ au c\^ot\'e $[q_n,x_n]$ 
v\'erifie $d(y_n,[q_n,x_n])\le C(\pi/2)$. 
On a suppos\'e $d(y_n,x_n)=d(y_n,(\gamma^-,\gamma^+))$ grande devant $k(C(\pi/2))$. 
Le lemme \ref{triangle-reciproque} donne alors 
$d(x_n,y_n)+d(y_n,q_n)-C(k(C(\pi/2)))\le d(x_n,q_n)\le d(x_n,y_n)+d(y_n,q_n)$.
Le m\^eme raisonnement s'applique bien s\^ur en
 rempla\c cant $x_n,y_n,q_n$ par $x'_0,y'_0,q'$. 
On en d\'eduit alors (en utilisant le fait que $d(x_n,y_n)=d(x'_0,y'_0)$) que 
$$
d(y_n,\partial\mathcal{H})-d(y'_0,\partial\mathcal{H})=
d(y_n,q_n)-d(y'_0,q')\ge d(x_n,q_n)-d(x'_0,q')-C(k(C(\pi/2)))\,.
$$

On sait par ailleurs  que 
$|d(x_n,\partial\mathcal{H})-d(x'_0,\partial\mathcal{H})|\le \Lambda$.
Il nous reste donc \`a comparer $d(x_n,q_n)$
 et $d(x_n,\partial\mathcal{H})$ d'une part, 
et $d(x'_0,q')$ \`a $ d(x'_0,\partial\mathcal{H})$ d'autre part. 
On a clairement $d(x_n,q_n)\ge d(x_n,\partial\mathcal{H})$. 
Montrons que $d(x'_0,q)$ est proche de $d(x'_0,\partial\mathcal{H})$. 

Consid\'erons le triangle $(p',q',v^-)$ et appliquons lui le lemme \ref{triangle-fin}. 
Avec les notations de ce lemme, le point $x'_0$ est sur $(v^-,p']$. Si 
$x'_0\in [\beta,v^-)$, alors il est ais\'e d'en d\'eduire que 
$|d(x'_0,p')-d(x'_0,q)|\le\delta$. 
Si $x'_0$  \'etait sur $]\beta,p']$,
 alors il existerait un point de $(\gamma^-\gamma^+)$ 
\`a l'int\'erieur du triangle $(\alpha,\beta,\gamma)$ plus proche de $y'_0$ que $x'_0$, ce 
qui contredirait la d\'efinition de $x'_0$. 
 
Tout ceci implique que 
$$
|d(x'_0,\partial\mathcal{H})-d(x'_0,q')|=|d(x'_0,p')-d(x'_0,q')|\le\delta\,.
$$
En rassemblant tous ces encadrements, on a d\'emontr\'e que 
\begin{eqnarray*}
d(y_n,\partial\mathcal{H})-d(y'_0,\partial\mathcal{H})
&\ge & d(x_n,\partial\mathcal{H})-d(x'_0,\partial\mathcal{H})-\delta-C(k(C(\pi/2)))\\
&\ge &-\Lambda-\delta-C(k(C(\pi/2)))\,.
\end{eqnarray*}
En particulier, on en d\'eduit le r\'esultat voulu, 
\`a savoir que pour tout $D>0$ assez grand, 
si $d(y_0,\partial\mathcal{H})\ge D$, alors 
$d(y_n,\partial\mathcal{H})\ge D-\Lambda-\delta-C(k(C(\pi/2)))$.


Reste \`a comprendre la distance de $y_n=\gamma^n.y_0$ 
\`a la g\'eod\'esique $(g^{-t}v)_{t\ge 0}$. 
On a suppos\'e $d(y_0,(g^{-t}v)_{t\ge 0})\ge R$. 
Si $y'_0$ est plus loin que $y_0$ de cette g\'eod\'esique, on a 
$d(y'_0,(g^{-t}v)_{t\in\R})\ge R$. 
Sinon, cela signifie que $y'_0$ est plus proche de $(g^{-t}v)_{t\ge 0}$ 
que de $(g^{-t}w)_{t\ge 0}$. 
Par sym\'etrie, et par le lemme \ref{lemme-cle},  on a 
\begin{eqnarray*}
d(y'_0,(g^{-t}w)_{t\ge 0})&=&d(y_0,(g^{-t}w)_{t\ge 0})\\
&\ge& d(y_0,(g^{-t}v)_{t\ge 0})-C(\alpha_0)-C_2(\alpha_0)\ge R-C(\alpha_0)-C_2(\alpha_0).
\end{eqnarray*}
D'apr\`es le lemme \ref{lemme-cle}, 
$d(y_n,(g^{-t}v)_{t\ge 0})\ge d(y_n,(g^{-t}w)_{t\ge 0})-C(\alpha_0)-C_2(\alpha_0)$. 

Par d\'efinition de $y'_0$ et $y_n$, $y_n$ est plus loin que $y'_0$ de $(g^{-t}w)_{t\in\R}$. 
On d\'eduit de tout cela que $d(y_n,(g^{-t}v)_{t\ge 0}\ge R-2C(\alpha_0)-2C_2(\alpha_0)$. \\

Autrement dit, si $y_0$ est dans $Hor^+(g^{-D}v)\setminus\mathcal{C}(v,R)$, alors le point
$\gamma^n y_0$ est dans 
$Hor^-(g^{-D+\Lambda+\delta+C(k(C(\pi/2)))}v)\setminus \mathcal{C}(v,R-2C(\alpha_0)-2C_2(\alpha_0))$.
En particulier, si $v$ est horosph\'erique \`a gauche, 
alors il est aussi horosph\'erique \`a droite. 
Compte tenu de la proposition \ref{right_iff_dense}, 
ceci d\'emontre que sous les hypoth\`eses du th\'eor\`eme \ref{general}, 
$(h^sv)_{s\ge 0}$ et $(h^sv)_{s\le 0}$ sont simultan\'ement
 denses (ou non) dans $\mathcal{E}$.

Ceci d\'emontre le th\'eor\`eme. 
\end{proof}


\section{Un contre-exemple pour finir}

On d\'emontre le r\'esultat suivant:\\

\noindent
{\bf Th\'eor\`eme \ref{contre-exemple}} {\em  
Il existe des surfaces hyperboliques contenant des
 vecteurs $v$ tels que $(h^sv)_{s\ge 0}$ est dense dans $\mathcal{E}$, 
$(h^sv)_{s\le 0}$ ne l'est pas, $v^-$ n'est pas une extr\'emit\'e d'un 
intervalle de $S^1\setminus\Lambda_\Gamma$, 
et $(g^{-t}v)_{t\ge 0}$ intersecte une infinit\'e de g\'eod\'esiques 
p\'eriodiques de longueur tendant vers l'infini, 
suivant un angle qui peut suivant les constructions tendre vers $0$, 
ou \^etre uniform\'ement minor\'e. \\}

L'id\'ee de la construction est la suivante. On prend $v^-=\infty$, $v^+=0$. 
Et on \'etudie l'orbite de $o=i$. 

On choisit sur $\R_+$ des demi-cercles $C^+_n$ tous tangents, 
de rayon euclidien born\'e, disons $1$, 
centr\'es en $2n+1$, $n\ge 0$,  allant jusqu'\`a l'infini. 
On choisit sur $\R_-$ des demi-cercles $C_n^-$ tangents deux \`a deux, 
centr\'es en $-x_n$ de rayon $R_n\to +\infty$. Par une r\'ecurrence imm\'ediate, 
on a $x_1=R_1$, et $x_{n}=\sum_{k=0}^{n-1}2R_k+R_n$.
On choisit des isom\'etries hyperboliques $\gamma_n$ 
de longueurs de plus en plus grandes, de points fixes $\gamma_n^-=2n+1$ et $\gamma_n^+=x_n$, 
et envoyant $C_n^+$ sur $C_n^-$. 

Un argument de ping pong classique donne le lemme suivant. 

\begin{lemm} Le groupe engendr\'e par 
les $\gamma_n$ est un groupe discret, groupe libre (de Schottky)
\`a une infinit\'e de g\'en\'erateurs. 
\end{lemm}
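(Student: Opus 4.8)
The plan is to prove that the group $\Gamma=\langle\gamma_n, n\ge 0\rangle$ is discrete and free via a ping-pong argument (the lemme de la table de ping-pong / Klein's criterion). The geometric data is already set up to make this routine: each $\gamma_n$ sends the closed half-disc bounded by $C_n^+$ onto the exterior of the closed half-disc bounded by $C_n^-$ (or vice-versa), and the collection of all discs $C_n^\pm$ is pairwise disjoint (the $C_n^+$ are tangent, centered at the odd integers $2n+1$ with radius $1$, and the $C_n^-$ are pairwise tangent on $\R_-$). First I would verify carefully that the $2(\#\text{générateurs})$ half-discs $\D_n^+$ (bounded by $C_n^+$) and $\D_n^-$ (bounded by $C_n^-$) are pairwise disjoint, using the explicit formula $x_n=\sum_{k=0}^{n-1}2R_k+R_n$ which is exactly what guarantees the $C_n^-$ are tangent and nested correctly along $\R_-$.

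Next I would set up the table de ping-pong proper. For each $n$, let $A_n^+$ be the intersection of $\H$ with the (open) Euclidean half-disc above $C_n^+$, and $A_n^-$ the corresponding region for $C_n^-$; since $\gamma_n$ is the hyperbolic isometry with fixed points $\gamma_n^-=2n+1$ and $\gamma_n^+=x_n$ mapping $C_n^+$ to $C_n^-$, it maps the complement of $A_n^+$ into $A_n^-$, and $\gamma_n^{-1}$ maps the complement of $A_n^-$ into $A_n^+$. The key verification is that all the regions $A_n^+, A_n^-$ (over all $n$ and both signs) are pairwise disjoint. Then for any reduced word $w=\gamma_{n_1}^{\epsilon_1}\cdots\gamma_{n_k}^{\epsilon_k}$ with $\epsilon_i=\pm 1$, applying the letters from right to left and tracking a base point (say $o=i$, which lies outside all the $A_n^\pm$) shows $w\cdot o$ lands in one of the regions $A_{n_1}^{\epsilon_1}$, hence $w\cdot o\ne o$, so $w\ne\mathrm{id}$. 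This gives both freeness (no reduced word is trivial) and discreteness (the base point is pushed a definite distance away, so the orbit of $o$ has no accumulation near $o$ coming from nontrivial elements).

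I would phrase the ping-pong with the standard criterion: if $G=\langle g_\alpha\rangle$ acts on a set $X$, and there exist pairwise disjoint nonempty subsets $(X_\alpha^+, X_\alpha^-)$ together with a point $x_0$ outside their union such that $g_\alpha^{\pm}(X\setminus X_\alpha^{\mp})\subset X_\alpha^{\pm}$, then $G$ is free on the $g_\alpha$. Here $X=\overline{\H}$ (or $\partial\H$), the sets are the $A_n^\pm$, and $x_0=o=i$. Discreteness then follows because a convergent sequence of distinct group elements $w_j\to\mathrm{id}$ would force $w_j\cdot o\to o$, contradicting that each $w_j\cdot o$ lies in a fixed region $A_{n}^{\epsilon}$ bounded away from $o$ for the first letter, unless infinitely many $w_j$ share no first letter at bounded distance --- one closes this by noting there are only finitely many generators whose regions come within any fixed distance of $o$, since the $C_n^\pm$ escape to infinity.

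The main obstacle is the bookkeeping of the disjointness of the ping-pong regions rather than any conceptual difficulty: one must confirm that the images $\gamma_n(\D_n^+)=\text{ext}(\D_n^-)$ are arranged so that the pairing is consistent across all $n$ simultaneously, in particular that no $A_m^\pm$ with $m\ne n$ intrudes into $A_n^+$ or its image. Because the $C_n^+$ sit on $\R_+$ with bounded radius and the $C_n^-$ sit on $\R_-$ with radii $R_n\to\infty$, the two families are separated by the imaginary axis (on which $v^-=\infty$, $v^+=0$, $o=i$ all lie), and within each family tangency guarantees disjoint open discs; the only real check is that a region and its $\gamma_n$-image do not overlap a third region, which the nesting formula for $x_n$ is designed to ensure. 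Once disjointness is in hand, the ping-pong lemma applies verbatim and yields the free Schottky group on infinitely many generators, completing the proof.
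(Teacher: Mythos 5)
Your proposal is correct and is exactly the argument the paper has in mind: the paper proves this lemma in one line by invoking \og un argument de ping pong classique \fg{}, and your write-up is precisely that classical Schottky ping-pong, with the only point needing care being the one you identify (the half-discs are pairwise tangent rather than disjoint, but the tangency points lie on $\R=\partial\H$, so the open regions in $\H$ are disjoint and the criterion applies with $x_0=o=i$). Your added detail on discreteness --- only finitely many of the discs come within a fixed distance of $o$ since the $C_n^\pm$ escape to infinity, so every nontrivial reduced word moves $o$ a uniformly positive distance --- correctly fills in what the paper leaves implicit.
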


\begin{rema}\rm Il n'est pas clair si le groupe ainsi construit 
v\'erifie $\Lambda_\Gamma=S^1$. 
Si ce n'est pas le cas, il serait int\'eressant de construire un 
contre-exemple v\'erifiant $\Lambda_\Gamma=S^1$. 
\end{rema}

Maintenant,  il est assez clair que le vecteur 
$v$ bas\'e en $o=i$ avec $v^-=\infty$, $v^+=0$,
ne peut pas \^etre horosph\'erique \`a gauche 
(\footnote{la terminologie {\em horosph\'erique \`a droite ou \`a gauche} 
peut sembler absurde sur cet exemple, mais ... elle est adapt\'ee 
pour tous les points de l'axe r\'eel dans le bord \`a l'infini.}),
car l'orbite de $o=i$ ne rencontre pas du tout
$Hor^-(v)$. 

Il est \'egalement clair que si $\xi\in\Lambda_\Gamma$, $\xi\neq v^\pm$,
 $\gamma_n\xi$ tend vers 
$v^-$ par la droite si $n\to +\infty$, 
et $\gamma_n^{-1}\xi$ tend vers $v^-$ par la gauche. 
Autrement dit, $v^-$ n'est pas dans le bord de $S^1\setminus \Lambda_\Gamma$. 

\begin{figure}[ht!]
\begin{center}
\input{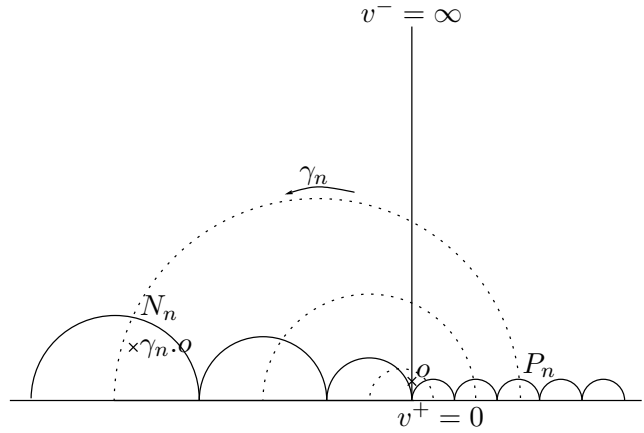}
\caption{D\'emonstration du th\'eor\`eme \ref{contre-exemple}}
\end{center}
\end{figure}

Il nous reste \`a v\'erifier que pour  un choix convenable de $x_n$ et de $r_n$, 
$v^-$ est horosph\'erique \`a droite. 
Pour cela, montrons que l'ordonn\'ee de $\gamma_n.o$ tend vers $+\infty$ 
(l'abscisse tend vers $-\infty$ par construction.)

Notons $z_n$ le point de coordonn\'ees $(2n+1,1)$, et 
$P_n$ le point d'intersection de l'axe de $\gamma_n$ et du cercle $C_n^+$, 
soit encore l'intersection des deux demi-cercles d'\'equations 
$\displaystyle (x-(2n+1))^2+y^2=1$ et 
$\left(x-\left(\frac{2n+1-x_n}{2}\right)\right)^2+y^2=\left(\frac{2n+1+x_n}{2}\right)^2$. 
Il a pour coordonn\'ees 
$\left(2n+1 -\frac{1}{x_n+2n+1}, \sqrt{1-\frac{1}{(x_n+2n+1)^2}}\right)$, 
de sorte que la distance hyperbolique de $P_n$ \`a $z_n$ v\'erifie
 $d(P_n,z_n)\to 0$ quand $n\to \infty$, et que pour 
$n$ assez grand (ind\'ependemment m\^eme du choix de $x_n$), $d(P_n,z_n)\le 1$. 
Par ailleurs, un calcul classique donne $d(o,z_n)\sim 2\ln n$ quand $n\to +\infty$. 
Par cons\'equent, $d(o,P_n)\le 3 \ln n$ quand $n\to +\infty$. 

L'image de $P_n$ par $\gamma_n$ est le point d'intersection $N_n=\gamma_n.P_n$ 
du demi-cercle $C_n^-$ et de l'axe de $\gamma_n$, d'\'equations respectives  $(x+x_n)^2+y^2=r_n^2$
et 
$(x-\frac{2n+1-x_n}{2})^2+y^2=\left(\frac{2n+1+x_n}{2}\right)^2$. 
Le point $N_n$ a donc pour coordonn\'ees 
$$
\left(-x_n+\frac{r_n^2}{2n+1+x_n}, r_n\sqrt{1-\frac{r_n^2}{(2n+1+x_n)^2}}\right)\,. 
$$

Maintenant, on sait que la distance de $\gamma_n.o$ \`a $N_n$ est
 au plus $3\ln n$ pour $n$ grand. 
On souhaite avoir des $\gamma_n.o$ de partie imaginaire aussi
 grande que possible. 
Or $\mbox{Im}(\gamma_n.o)\ge r_n\sqrt{1-\frac{r_n^2}{(2n+1+x_n)^2}}-3\ln n$. 

Observons que $x_n+r_n=2\sum_{k=1}^{n}r_k$. 
Si nous choisissons pour tout $k\ge 1$, $r_k=k$, 
nous obtenons $x_n+n=n(n+1)$, soit $x_n=n^2$, et 
$r_n\sqrt{1-\frac{r_n^2}{(2n+1+x_n)^2}}=n\sqrt{1-\frac{n^2}{(n+1)^4}}\sim n$ 
quand $n\to \infty$. 
Autrement dit, ce choix de $(r_n)_{n\ge 1}$ convient. 

Si nous choisissons pour tout $k\ge 1$, $r_k=\alpha^k$, $\alpha>1$,
 nous obtenons 
$x_n+\alpha^n=2\alpha(\alpha^n-1)/(\alpha-1)$, 
puis $x_n=\alpha^n\frac{\alpha+1}{\alpha-1}-\frac{2\alpha^{1-n}}{\alpha-1}$. 
Notons $y_n$ l'ordonn\'ee de $N_n$.
Une v\'erification imm\'ediate donne 
$y_n^2\sim \alpha^{2n}\frac{4\alpha}{(\alpha+1)^2}>>(3\ln n)^2$. 
Donc $\mbox{Im}(\gamma_n.o)\to +\infty$ quand $n\to \infty$, et ce
 choix de $(r_n)_{n\in\N}$ convient aussi.

\begin{rema}\rm Il est clair par construction qu'au quotient, 
 la g\'eod\'esique $(g^{-t}v)_{t\ge 0}$
 croise une infinit\'e de g\'eod\'esiques ferm\'ees, de 
longueur tendant vers $+\infty$, \`a savoir
les projections sur la surface quotient des axes des $\gamma_n$. 

Un calcul montre que l'angle $\theta_n$ entre la g\'eod\'esique $(v^-v^+)=i\R$, 
et l'axe de $\gamma_n$ v\'erifie 
$\cos\theta_n=\frac{x_n-(2n+1)}{x_n+2n+1}$. 
Dans les deux exemples ci-dessus,  on a $2n+1=o(x_n)$, 
d'o\`u $\cos \theta_n\to 1$ et $\theta_n\to 0$. 

Autrement dit, l'exemple ci-dessus ne v\'erifie 
aucune des deux hypoth\`eses du th\'eor\`eme 
\ref{general}.

On peut le modifier pour qu'il v\'erifie l'une des deux. 
Par exemple, la lectrice v\'erifiera ais\'ement que 
si les cercles $C_n^-$ sont inchang\'es, 
centr\'es en $-x_n$, et de rayon $R_n\to +\infty$, 
mais les cercles $C_n^+$ sont maintenant centr\'es en $+x_n$, 
et toujours de rayon $1$ (ils ne sont plus tangents deux \`a deux), 
alors la g\'eod\'esique $(v^-v^+)=i\R$ 
intersecte l'axe $(-x_n,x_n)$ de $\gamma_n$ orthogonalement. 
La distance de $o$ \`a $P_n$ est \'equivalente \`a $2\ln x_n$, 
donc $d(\gamma_n.o,N_n)\sim 2\ln x_n$, alors que l'ordonn\'ee 
du point $N_n$ vaut $y_n=r_n\sqrt{1-\frac{r_n^2}{4x_n^2}}$. 
Si $R_n=n$ et $x_n=n^2$, la partie imaginaire de $\gamma_n.o$ 
tend vers $+\infty$, de sorte que le point $+\infty$ est encore 
horosph\'erique \`a droite mais pas \`a gauche. 

Par ailleurs, on peut si on le souhaite rajouter des isom\'etries 
envoyant des cercles de hauteur born\'ee les uns sur les autres, 
pour <<~boucher les trous~>> entre les demi-cercles $C_n^+$. 
Il n'est pas s\^ur que cela donne un ensemble limite $\Lambda_\Gamma=S^1$, 
du fait du diam\`etre non born\'e des cercles $C_n^-$. 
\end{rema}

\end{document}